\newcommand{\bld}[1]{{\bf{#1}}}
\newcommand{\norm}[1]{\left\Vert#1\right\Vert}
\def\R{\mathbb{R}}
\def\E{\mathbb{E}}
\begin{document}

\title{Three improvements to multi-level Monte Carlo simulation of SDE systems\thanks{This work was performed under the auspices of the U.S. DOE by the University of California, Los Angeles, under grant DE-FG02-05ER25710.}}
\author{L.F. Ricketson\thanks{Mathematics Department, University of California at Los Angeles, lfr@math.ucla.edu}}
\date{\today}

\maketitle

\begin{abstract}
We introduce three related but distinct improvements to multilevel Monte Carlo (MLMC) methods for the solution of systems of stochastic differential equations (SDEs).  Firstly, we show that when the payoff function is twice continuously differentiable, the computational cost of the scheme can be dramatically reduced using a technique we call `Ito linearization'.  Secondly, by again using Ito's lemma, we introduce an alternative to the antithetic method of Giles et.\ al [M.B. Giles, L. Szpruch. arXiv preprint arXiv:1202.6283, 2012] that uses an approximate version of the Milstein discretization requiring no L\'{e}vy area simulation to obtain the theoretically optimal cost-to-error scaling.  Thirdly, we generalize the antithetic method of Giles to arbitrary refinement factors.  We present numerical results and compare the relative strengths of various MLMC-type methods, including each of those presented here.
\end{abstract}

\begin{keywords}
stochastic differential equations, multi-level, Monte Carlo, Ito's lemma
\end{keywords}

\begin{AMS}
65C05, 65C30
\end{AMS}

\section{Introduction}
Stochastic differential equations (SDEs) have numerous applications: neuroscience \cite{burkitt2006review,kallianpur1987weak}, chemical kinetics \cite{gillespie2000chemical}, civil engineering  \cite{haghighat1988predictive, harris1977modelling},  biological fluid dynamics \cite{kramer2008foundations}, physics \cite{lemons2009small, milstein2004stochastic}, and finance \cite{shreve2004stochastic}, to name a few.  A prototypical class of problems may be characterized as follows: let $\bld{S}(t) \in \R^d$ satisfy the system of SDEs 
\begin{equation} \label{SDE}
	dS_i = a_i(\bld{S},t) \, dt + \sum_{j=1}^D b_{ij}(\bld{S},t) \, dW_j, \qquad \bld{S}(0) = \bld{S}_0
\end{equation}
for $t \in [0, T]$ and some given $\bld{S}_0$, where $S_i$ is the $i^\textrm{th}$ component of $\bld{S}$, $W(t) \in \R^D$ is a $D$ dimensional Brownian motion, $a_i: \R^d \rightarrow \R$ for each $i \in \{1, 2, ..., d\}$, and similarly for $b_{ij}$.  Then, for some given $P: \R^d \rightarrow \R$, evaluate $\E [P(\bld{S}(T))]$.  That is, we wish to find the mean value of some functional of the solution of an SDE.  

Since exact solutions are available for only the simplest of SDEs, finite difference methods are frequently used to approximate their solutions.  The expectation is then evaluated via a Monte Carlo method.  The purpose of the present work is to present three improvements to the class of multilevel Monte Carlo (MLMC) methods - introduced in \cite{giles2008multilevel} - which are the current state of the art.  

The MLMC methods themselves improve upon the most straightforward numerical method for the archetypal SDE problem above.  That method is to approximate the SDE's solution by the well-known Euler-Maruyama discretization with time step $h$, given by
\begin{equation} \label{eulerdisc}
	S_{i,n+1} = S_{i,n} + a_i(\bld{S}_n, t_n) h + \sum_{j=1}^D b_{ij}(\bld{S}_n, t_n) \Delta W_{j,n},
\end{equation}
where $\bld{S}_n$ approximates $\bld{S}(t_n)$, with $t_n = nh$, and the $\Delta W_{j,n}$ are independent normal random variables with mean zero and variance $h$.  We may then generate $N$ independent samples of $\bld{S}_{T/h}$ by generating different $\Delta W_{j,n}$ for each sample, and estimate the desired expectation by
\begin{equation}
	\E[P(\bld{S}(T))] \approx \frac{1}{N} \sum_{r=1}^N P\left(\bld{S}_{T/h}^{(r)}\right),
\end{equation}
where $r$ indexes the $N$ samples.  

One desires to approximate the true expectation to within an RMS error $\varepsilon$, which will scale as $O(N^{-1/2})$ and $O(h)$.  The computational cost of the scheme is proportional to the total number of time steps taken, which scales as $O(N/h)$.  Thus, we see that the computational cost of achieving an RMS error $\varepsilon$ - which we henceforth denote by $K$ - is $O(\varepsilon^{-3})$.  

In many contexts, such a scaling is prohibitive, so a number of methods which improve upon it have been developed.  To understand them, we must define the notions of strong and weak errors for SDE approximations.  Let $\bld{S}_h$ be an approximate solution of (\ref{SDE}) obtained by some discretization with time-step $h$.  We say that discretization has \textit{weak error} of order $p$ if
\begin{equation}
	|\E[g(\bld{S}_h)] - \E[g(\bld{S})] | = O(h^p)
\end{equation}
for some broad class of functions $g: \R^d \rightarrow \R$ (in particular, that class should include $P$).  We say that discretization has \textit{strong error} of order $q$ if
\begin{equation} \label{strongscale}
	\E\left[ |\bld{S}_h - \bld{S}| \right] = O(h^q).
\end{equation}
We note that the Euler discretization has $p=1$ and $q=1/2$ \cite{kloeden2011numerical}.  

It is straightforward to see that if we modify the naive scheme presented above to use a discretization of weak order $p$, we have
\begin{equation}
	K = O\left(\varepsilon^{-(2 + 1/p)}\right),
\end{equation}
independent of $q$.  In contrast, the multilevel Monte Carlo (MLMC) methods introduced in \cite{giles2008multilevel} and expanded in \cite{giles2008improved, giles2012antithetic} achieve
\begin{equation} \label{MLMCscalings}
   K = \left\{
     \begin{array}{lr}
       O\left(\varepsilon^{-2} (\log \varepsilon)^2\right) & : q = 1/2 \\
       O\left( \varepsilon^{-2} \right) & : q>1/2
     \end{array}
   \right.
\end{equation}
so long as $p>0$.  The proof of this fact may be found in \cite{giles2008multilevel}, and we sketch the argument in section 2.  

We thus see that the multilevel method scales better than the naive method outlined above for \textit{any} discretization with finite weak order $p$.  Moreover, the larger the weak order of a discretization, the more regularity we require of $P$ to achieve that order \cite{kloeden2011numerical}, further limiting the use of high-order weak schemes.  Multilevel schemes are thus a great improvement over simple schemes of the type outlined above.

The MLMC schemes achieve their improved cost scaling by approximating the SDE's solution with many different time-steps (called `levels') and taking advantage of the discretization's strong convergence to get low variance estimates of the difference in the payoff at adjacent levels.  The remaining high variance quantity - the payoff's expectation at the lowest level - is relatively cheap to compute because of the large time-step.  However, the algorithm could be further improved by also applying a variance reduction at this lowest level.  The first contribution of the present work is to show that, when the payoff function is twice continuously differentiable, we can reduce the variance at the lowest level to zero by finding the payoff using Ito's lemma instead of direct evaluation.  

Our second contribution is to again make use of Ito's lemma to derive a variant of the MLMC method that achieves the cost scaling $O(\varepsilon^{-2})$ in spite of having $q=1/2$.  This is a desirable result because discretizations with $q>1/2$ require the simulation of L\'{e}vy areas when $D>1$, and L\'{e}vy areas are notoriously difficult to sample.  Indeed, no suitable algorithm has been implemented for $D>2$.  A method achieving $O(\varepsilon^{-2})$ scaling without L\'{e}vy area simulation was also derived in \cite{giles2012antithetic}.  However, the method we propose, while similar in some respects, is simpler to derive and slightly faster for twice differentiable payoffs.  

Thirdly, we make use of our analysis to generalize the antithetic method in \cite{giles2012antithetic} to arbitrary refinement factor - that is, the ratio between the time-steps at adjacent levels.  The method was originally derived for the case of refinement factor $M = 2$, but we show that $M \approx 4$ to $5$ is optimal.  Importantly, the generalization to arbitrary $M$ still requires the sampling of only one antithetic path, so the generalization introduces no extra computational complexity.  The key lemma in this development - Lemma \ref{antithet} in the present work - was originally proved in \cite{giles2012antithetic2} toward a different end.  Given this lemma, the result is straightforward, but does not appear elsewhere in the literature to the author's knowledge.  

The remainder of the paper is structured as follows.  Section 2 reviews the details of MLMC methods and the difficulty in implementing SDE solvers with $q>1/2$, focusing in particular on the Milstein discretization.  In section 3, we show how Ito's lemma can be used to eliminate the lowest level variance in MLMC methods.  In section 4, we use the results of the previous section to derive an `approximate Milstein' version of the MLMC method that achieves the $O(\varepsilon^{-2})$ cost scaling. In section 5, we leverage results from the previous section to generalize the antithetic method of \cite{giles2012antithetic}.  In section 6, we summarize results and present pseudocode for the algorithms proposed in previous sections.  In section 7, we present and discuss numerical results.  We conclude in section 8.  

\section{Background}
The first portion of this section reviews the derivation and basic properties of MLMC methods, while the second reviews the Milstein discretization, the difficulties inherent in its implementation, and some previous efforts to negotiate those difficulties.  For more details on elementary MLMC, see \cite{giles2008multilevel}.  For more information on Milstein, see \cite{gaines1994random,kloeden1992approximation,wiktorsson2001joint}.

\subsection{MLMC Review}
The MLMC schemes are constructed in the following way: for some integer $M > 1$, let $h_l = T M^{-l}$ for $l = 0, 1, 2, ..., L$. Setting $P_l = P(\bld{S}_{h_l}(T))$, the following identity holds:
\begin{equation} \label{fundMLMCid}
	\E \left[ P_L \right] = \E \left[ P_0 \right] + \sum_{l=1}^L \E \left[ P_l - P_{l-1} \right].  
\end{equation}
The weak convergence of the discretization guarantees that $\E [ P_L ]$ differs from the true expectation by $O(h_L^p)$, and (\ref{fundMLMCid}) shows that it can be estimated by estimating the $L+1$ expectations on the right side.  The first term is relatively cheap to compute, since the time-step $h_0 = T$ is much larger than $h_L$.  Meanwhile, the quantities $P_l - P_{l-1}$ have variances controlled by the strong convergence of the discretization, so that their expectations can be estimated accurately with a relatively small number of samples.   

We make this more concrete by defining
\begin{equation}
	V_l = \textrm{Var}\left[ P_l - P_{l-1} \right],
\end{equation}
for $l > 0$, where $\textrm{Var}[\cdot]$ denotes the variance of a random variable, and assuming $P$ has a global Lipschitz bound.  Then, if $P_l$ and $P_{l-1}$ are sampled using the same Brownian paths, we have
\begin{equation} \label{vartostrongarg}
\begin{split}
	V_l &= \E \left[ (P_l - P_{l-1})^2 \right] - \E \left[ P_l - P_{l-1} \right]^2 \\
	&\lesssim \E \left[ | \bld{S}_{h_l} - \bld{S}_{h_{l-1}} |^2 \right] + O\left(h_l^{2p}\right) \\
	&= O\left(h_l^{2q}\right) + O\left(h_l^{2p}\right).
\end{split}
\end{equation}
It is a general feature of SDE finite difference methods that $p \geq q$ \cite{kloeden2011numerical}, so we will write 
\begin{equation}
	V_l = O \left(h_l^{2q}\right)
\end{equation}
henceforth.  

If we estimate $\E \left[P_l - P_{l-1} \right]$ with $N_l$ samples - that is
\begin{equation}
	\E \left[P_l - P_{l-1} \right] \approx \hat{Y}_l \equiv \frac{1}{N_l} \sum_{r=1}^{N_l} \left(P_l^{(r)} - P_{l-1}^{(r)}\right),
\end{equation}
where $r$ again indexes the $N_l$ samples - then the variance in this estimate is $V_l/N_l$.  Similarly, define $V_0 = \textrm{Var} [P_0]$ and let 
\begin{equation}
	\hat{Y}_0 = \frac{1}{N_0} \sum_{r=1}^{N_0} P_{0}^{(r)}.
\end{equation}
Then, let $\hat{P}_L$ be our estimate of $\E[P_L]$ defined by
\begin{equation}
	\hat{P}_L = \sum_{l=0}^L \hat{Y}_l.
\end{equation}
This estimate has variance 
\begin{equation}
	\textrm{Var} \left[\hat{P}_L \right] = \sum_{l=0}^L \frac{V_l}{N_l}.
\end{equation}

The desired RMS error bound of $\varepsilon$ may thus be written as
\begin{equation} \label{MLMCerrors}
	(c_1 h_L)^2 + \sum_{l=0}^L \frac{V_l}{N_l} \leq \varepsilon^2,
\end{equation}
where $c_1$ is the constant of proportionality in the weak error estimate of the SDE scheme.  That is, 
\begin{equation}
	|\E[ P(\bld{S}(T)) - \E[P_L] | \approx c_1 h_L
\end{equation}
for sufficiently small $h_L$.  Note that we assume the scheme is first order in the weak sense ($p=1$), a quality shared by all the schemes considered in this paper.  We call the first term in (\ref{MLMCerrors}) the \textit{bias error}; it is deterministic and arises from the finite time-step approximation of the SDE's solution.  We call the second term - the sum - the \textit{sampling error}; it arises from the estimation of expectations using a finite number of samples.  

In the analysis of Giles, (\ref{MLMCerrors}) is satisfied by setting each of the two mean squared errors to $\varepsilon^2/2$.  The bias error constraint then immediately gives a formula for $L$, the total number of levels to be used:
\begin{equation}
	L = \left\lceil \frac{\log \left( \sqrt{2} c_1T/ \varepsilon \right)}{\log M} \right\rceil.
\end{equation}
The sampling error constraint gives rise to a constrained optimization problem: one wishes to minimize the computational cost - modeled by the total number of time steps taken - 
\begin{equation}
	K \propto \sum_{l=0}^L N_l(h_l^{-1} + h_{l-1}^{-1}) = \left( 1 + \frac{1}{M} \right) \sum_{l=0}^L \frac{N_l}{h_l},
\end{equation}
subject to the constraint $\sum_{l=0}^L (V_l / N_l) \leq \varepsilon^2/2$.  A Lagrange multiplier argument shows that the optimal choice is
\begin{equation} \label{optimalsamplenumbers}
	N_l = \frac{2}{\varepsilon^2} \sqrt{V_l h_l} \left( \sum_{l=0}^{L} \sqrt{V_l/h_l} \right),
\end{equation}
which in turn gives the cost 
\begin{equation} \label{costformula}
	K \propto \frac{2}{\varepsilon^2} \left( 1 + \frac{1}{M} \right) \left( \sum_{l=0}^{L} \sqrt{V_l/h_l} \right)^2.
\end{equation}

When $q=1/2$, we have $V_l = O(h_l)$, so that each term in the sum is $O(1)$, making the sum $O(L)$.  Since $L$ scales like $\log \varepsilon$, we see that $K = O(\varepsilon^{-2} (\log \varepsilon)^2)$, as stated in the introduction.  When $q>1/2$, the terms in the sum decrease geometrically, so that the sum to $L$ is bounded by a convergent infinite sum, giving $K = O(\varepsilon^{-2})$.  

In practice, the constant $c_1$ is not known, so $L$ cannot be specified at the start of the simulation.  One typically performs the necessary steps for $L=1$, estimates the bias error by looking at $\hat{Y}_L$, and increments $L$ while the bias error is estimated to be more that $\varepsilon/\sqrt{2}$.  More details can be found in \cite{giles2008multilevel} and in section 6 of this paper.  

\subsection{Milstein and L\'{e}vy Areas}
The simplest finite difference scheme for SDEs achieving $q>1/2$ - and thus yielding the optimal MLMC scaling - is the Milstein scheme, written as
\begin{equation} \label{milstein}
	S_{i,n+1} = S_{i,n} + a_{i,n} \Delta t + \sum_{j=1}^D b_{ij,n} \Delta W_{j,n} + \sum_{j,k=1}^D h_{ijk,n} (\Delta W_{j,n} \Delta W_{k,n} - \Omega_{jk} \Delta t - A_{jk,n}), 
\end{equation}
where we've abbreviated $a_i (\bld{S}_n, t_n) = a_{i,n}$ and similarly for $b_{ij,n}$ and $h_{ijk,n}$, $\Omega_{jk}$ is the correlation matrix associated with $W$, and $h$ and $A$ are defined by
\begin{equation} \label{hdef}
	h_{ijk} = \frac{1}{2} \sum_{l=1}^d b_{lk} \frac{\partial b_{ij}}{\partial x_l}, 
\end{equation}
\begin{equation}
	A_{jk,n} = \int_{t_n}^{t_{n+1}} \int_{t_n}^s \left[ dW_j(u) dW_k(s) - dW_k(u) dW_j(s) \right].
\end{equation}

The $A_{jk,n}$ are known as L\'{e}vy areas.  When $D=1$, they vanish, since $A_{jj,n} = 0$, and Milstein is straightforward to implement.  When $D=2$, there is effectively only one non-zero L\'{e}vy area, since $A_{jk,n} = -A_{kj,n}$.  Recently, an efficient method has been developed for sampling a single L\'{e}vy area \cite{dimits2013higher}, making Milstein implementation feasible when $D=2$.  Sampling multiple L\'{e}vy areas is a more challenging problem because they are not independent.  A method for jointly sampling multiple L\'{e}vy areas was also proposed in \cite{dimits2013higher, gaines1994random} that builds upon the methods therein and involves sampling a random orthogonal matrix, techniques for which are available in \cite{mezzadri2007generate}. However, this method has not been implemented or tested with MLMC.  As a result, implementing the Milstein discretization and thus achieving the $O(\varepsilon^{-2})$ scaling for MLMC methods is quite challenging when $D>2$, except in special cases.  

Fortunately, in \cite{giles2012antithetic} it was observed that while $q>1/2$ is sufficient to achieve the optimal scaling, it is not necessary.  The necessary condition is
\begin{equation}
	\textrm{Var}[P_l - P_{l-1}] = O\left(h_l^\beta \right)
\end{equation}
for some $\beta > 1$.  We can see from (\ref{vartostrongarg}) that if $P$ has a global Lipschitz bound, this necessary condition is achieved if
\begin{equation} \label{Sscale}
	\E \left[ |\bld{S}_{h_l} - \bld{S}_{h_{l-1}} |^2 \right] = O\left( h_l^{\beta} \right).
\end{equation}
This resembles a strong scaling requirement (\ref{strongscale}), but there is a key difference.  Here, we require two approximate solutions to be within $O(h_l^\beta)$ of each other in the mean square sense.  It is not necessary that \textit{either one} of these approximate solutions be within $O(h_l^\beta)$ of the \textit{true} solution, as would be the case if we were relying on strong convergence.  

In \cite{giles2012antithetic}, the Milstein scheme (\ref{milstein}) with the L\'{e}vy areas set to zero, along with an antithetic path sampling method, is used in order to achieve (\ref{Sscale}) with $\beta>1$, and thus achieve the $O(\varepsilon^{-2})$ cost scaling for SDE systems with arbitrary $D$.  For the moment, we refer the reader to that paper for its detailed derivation and implementation.  We will discuss some key aspects of the antithetic method as they become relevant in the course of our discussion here.  

In section 4 of this paper, we derive an alternative method to that in \cite{giles2012antithetic}.  We also achieve the $O(\varepsilon^{-2})$ cost scaling for arbitrary $D$ without simulating L\'{e}vy areas.  Our method requires more regularity of the payoff function, but is slightly cheaper and simpler to derive.  In section 5, we generalize the results of \cite{giles2012antithetic} to $M>2$.  Since much of the analysis from \cite{giles2012antithetic} carries over directly, we simply cite several results without reprinting proofs.  


\section{Variance Reduction via Ito's Lemma}
We begin with a simple observation.  Suppose that $P(\bld{S}) = S_m$, for some $1 \leq m \leq d$.  That is, $P$ simply picks out one of the components of $\bld{S}$.  Such a payoff function is useful in chemical kinetics, for example, in which each component of the SDE represents the concentration of a particular species and we may desire to compute the mean concentration of some key compound.  

Then, we may write a simple analytic expression for $P_0$ - the payoff when the time-step is $T$ - when the Euler discretization is used:
\begin{equation}
	P_0 = S_{m,0} + a_m (\bld{S}_0) T + \sum_{j=1}^D b_{mj}(\bld{S}_0) W_j(T),
\end{equation}
where $\bld{S}_0$ is the initial data and $S_{m,0}$ is its $m^{\textrm{th}}$ component.  The expectation of this expression is simple to evaluate:
\begin{equation} \label{P0eval}
	\E[P_0] = S_{m,0} + a_m (\bld{S}_0) T.
\end{equation}
The same result applies to the Milstein scheme, since the additional term has zero expectation.  Thus, when $P$ has this simple form (or, indeed, is any linear function of $\bld{S}$), the base payoff can be evaluated exactly in terms of the initial condition.  There is no need to sample any random variables at all.  In effect, $V_0 = 0$ and $N_0 = 1$.  

This can represent a great computational saving for MLMC because, as already noted, the lowest level is the only level at which no variance reduction is gained.  That is, with the standard approach, $V_0$ need not obey the same scaling as the other $V_l$, and may very well be disproportionately large, thus causing the cost of computing $P_0$ to dominate other costs.  

To illustrate this point, we show in fig.\ 1 the fraction of the computational work at each level in a sample MLMC computation, using both the Euler and antithetic methods.  We see that, for each method, the base level (zero) is the most expensive.  The base level represents an even larger fraction of the work in the antithetic method.  This is a result of the improved variance scaling, which reduces the cost of the higher levels.  

\begin{figure}
  \centering
	\includegraphics[width=.6\textwidth]{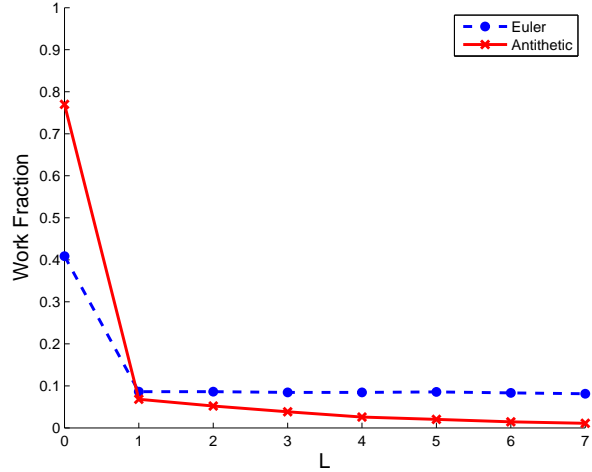}
	\caption{The fraction of the computational work exerted at each level in a sample MLMC computation.  The Heston model - see (\ref{heston}) and proceeding text for specification - is solved with a sinusoidal payoff function, and $M=2$.}
\end{figure} 

There is thus a motivation to investigate whether the technique of eliminating the cost of computing the base level payoff can be generalized to less trivial payoff functions.  
Toward that end, assume $P$ is twice continuously differentiable.  Then, Ito's lemma gives an SDE for $P$:
\begin{equation} \label{ito}
	dP = \left( \sum_{i=1}^d a_i P_{x_i} + \frac{1}{2} \sum_{j=1}^D \sum_{i,k=1}^d b_{ij} b_{kj} P_{x_i x_k} \right) \, dt + \sum_{j=1}^D \sum_{i=1}^d b_{ij} P_{x_i} \, dW_j, 
\end{equation}
where subscripts on $P$ denote partial derivatives, and all functions are evaluated at $\bld{S}(t)$. 

With this in mind, construct a vector $\mathcal{S} \in \R^{d+1}$ as follows: for $1 \leq k \leq d$, set $\mathcal{S}_k = S_k$, and set $\mathcal{S}_{d+1} = P(\bld{S})$.  Then, $\mathcal{S}$ solves
\begin{equation} \label{modSDE}
	d\mathcal{S}_i = \alpha_{i} \, dt + \sum_{j=1}^D \beta_{ij} \, dW_j
\end{equation}
where $\alpha_i(\mathcal{S}) = a_i(\bld{S})$ and $\beta_{ij}(\mathcal{S}) = b_{ij}(\bld{S})$ for $i \leq d$, and 
\begin{equation} \label{abdefs}
	\alpha_{d+1} (\mathcal{S}) = \sum_{i=1}^d a_i P_{x_i} + \frac{1}{2} \sum_{j=1}^D \sum_{i,k=1}^d b_{ij} b_{kj} P_{x_i x_k}, \qquad \beta_{(d+1)j} (\mathcal{S}) = \sum_{i=1}^d b_{ij} P_{x_i}.
\end{equation}
This is a system of SDEs in the usual sense.  Consider further the ``payoff" function $\tilde{P}(\mathcal{S}) = \mathcal{S}_{d+1}$, which is equal to $P(\bld{S})$.  We now have two distinct formulations of the same problem.  The first is to find $\E [ P(\bld{S}(T)) ]$ when $\bld{S}$ solves (\ref{SDE}).  The second is to find $\E [ \tilde{P}(\mathcal{S}(T)) ] = \E [ \mathcal{S}_{d+1}(T) ]$ when $\mathcal{S}$ solves (\ref{modSDE}).  

The second, new formulation has the considerable advantage that its payoff function is linear, and in particular is of the form considered above, so that $\E [P_0]$ may be immediately evaluated using (\ref{P0eval}) with $m = d+1$.  The MLMC method may be applied to (\ref{modSDE}) with $\tilde{P}$ using any discretization available.  This approach will not change the resulting cost scaling, but will reduce the cost by a constant factor that may be significant.  We demonstrate in section 7 via numerical experiments that these savings are frequently considerable.  

This method of using Ito's lemma to linearize the payoff function - we refer to this henceforth as the \textit{Ito linearization technique} - does have two drawbacks.  The first, and most serious, is that two continuous derivatives are required of $P$ for Ito's lemma to apply.  In finance the payoff frequently has a discontinuity in the first derivative - e.g. European options - or even in the function itself - e.g. digital options.  Ito linearization in its present form is not useful for these problems.  

In many applications though, there are many natural payoffs with sufficient regularity.  We have already noted that in chemical kinetics a simple linear payoff function is of interest.  One may also wish to compute the covariances of the chemical concentrations, which may be computed from the means and the payoffs $P(\bld{S}) = S_i S_j$ for each $i,j$, which of course have the necessary smoothness.  

The second drawback is that (\ref{modSDE}) is a $(d+1)$-dimensional system, while (\ref{SDE}) is only $d$-dimensional.  Each time-step of (\ref{modSDE}) is thus slightly more expensive - by a factor of roughly $(d+1)/d$ - than a corresponding time-step of (\ref{SDE}).  In numerical tests, we find that the savings at the base level more than compensate for this added expense.  


\section{Approximate Milstein for MLMC}
We now turn to the derivation of an approximate version of the Milstein discretization that achieves $O(\varepsilon^{-2})$ cost scaling in arbitrary dimension.  There are several observations that make this possible, the first of which is that when estimating $\E [P_l - P_{l-1}]$, the discretizations used to compute $P_l$ and $P_{l-1}$ need not be identical for the reformulated problem (\ref{modSDE}).  

To clarify this point, let us assume we have two discretizations.  Given the same $h$ and $\Delta W$, the `fine' discretization yields the payoff $P^f$ while the `coarse' one yields $P^c$.  We have the following generalization of (\ref{fundMLMCid}):
\begin{equation} \label{GeneralizedSum}
	\E \left[ P^f_L \right] = \E \left[ P^f_0 \right] + \sum_{l=1}^L \left\{ \E \left[ P^f_l - P^c_l \right] + \E \left[ P^c_l - P^f_{l-1} \right] \right\}.
\end{equation}
In the methods of Giles, it is required that 
\begin{equation} \label{GilesConstraint}
	\E \left[ P^c_l \right] = \E [ P^f_{l-1} ]
\end{equation}
for some large class of functions $P$, so that the second term in the sum in (\ref{GeneralizedSum}) is identically zero and (\ref{GeneralizedSum}) reduces to (\ref{fundMLMCid}).  This requires that $\bld{S}^f_{h_l} (T)$ and $\bld{S}^c_{h_l}(T)$ be {\em identically distributed}, which in turn requires that the discretizations used at the fine and coarse levels be at least very nearly identical.   

However, when solving the reformulation afforded by Ito's lemma in the previous section, we may rewrite (\ref{GeneralizedSum}) as 
\begin{equation}
	\E \left[ \mathcal{S}^{f,L}_{d+1} \right] = \E \left[ \mathcal{S}^{f,0}_{d+1} \right] + \sum_{l=1}^L \left\{ \E \left[ \mathcal{S}^{f,l}_{d+1} - \mathcal{S}^{c,l}_{d+1} \right] + \E \left[ \mathcal{S}^{c,l}_{d+1} - \mathcal{S}^{f,l-1}_{d+1} \right] \right\},
\end{equation}
where $\mathcal{S}^{f,l}$ is the result of the `fine' discretization with time-step $h_l$, and similarly for $\mathcal{S}^{c,l}$.  This now reduces to the analogue of (\ref{fundMLMCid}) if
\begin{equation} \label{RicketsonConstraint}
	\E [ \mathcal{S}^{c,l} ] = \E [ \mathcal{S}^{f,l-1} ].
\end{equation}
This condition is actually more than is necessary - we only need the expectations of the last components to match - but there will be no additional difficulty in enforcing this condition.  Being constrained by (\ref{RicketsonConstraint}) instead of (\ref{GilesConstraint}) creates considerable freedom in choosing different fine and coarse discretizations.  We leverage this freedom extensively in the remainder of this section.  

In what follows, we develop an `approximate Milstein' method, whose intended application is MLMC methods applied to the modified SDE (\ref{modSDE}), as it takes advantage of this system's linear payoff function.  We will, however, denote the solution of the SDE by $\bld{S}$ - rather than $\mathcal{S}$ - to emphasize the generality of the specific results.  It is only their application to MLMC that requires the modified SDE.  

We begin by establishing some notation: define
\begin{equation}
	D_i^f(\bld{S}, t, h, \Delta W_n) \equiv a_i(\bld{S}, t) h + \sum_{j=1}^D b_{ij}(\bld{S}, t) \Delta W_{j,n} + \sum_{j,k=1}^D h_{ijk}(\bld{S}, t) (\Delta W_{j,n} \Delta W_{k,n} - \Omega_{jk} h),
\end{equation}
\begin{equation}
	\begin{split}
	&D_i^c(\bld{S}_1, \bld{S}_2, t, h, \delta W_n, \delta W_{n+\frac{1}{2}}) \equiv a_i(\bld{S}_1, t) \Delta t + \sum_{j=1}^D b_{ij}(\bld{S}_2, t) \Delta W_{j,n}\\ 
	&+ \sum_{j,k=1}^D h_{ijk}(\bld{S}_2, t) (\Delta W_{j,n} \Delta W_{k,n} - \Omega_{jk} \Delta t - \delta W_{j,n} \delta W_{k,n+\frac{1}{2}} + \delta W_{j,n+\frac{1}{2}} \delta W_{k,n}),
\end{split}
\end{equation}
where $\Delta W_n$ is a vector in $\R^D$ whose $j^\textrm{th}$ component is $\Delta W_{j,n} = \delta W_{j,n} + \delta W_{j,n+\frac{1}{2}}$.  The analysis is simpler when $M=2$, so we proceed with that case initially and generalize to arbitrary $M$ in section 4.3.  Fix $l$ and set $\delta t = h_l$, $\Delta t = 2 \delta t = h_{l-1}$, $t_n = n\Delta t$.  

\subsection{Review of Antithetic Method}
Because the method we develop here is closely related to the antithetic method of \cite{giles2012antithetic}, we first state and review that algorithm.  In our notation, the antithetic scheme may be written as
\begin{equation} \label{antitheticsummary}
\begin{split}
	\bld{S}_{n+1}^{f,l} = \bld{S}_{n+\frac{1}{2}}^{f,l} + \bld{D}^f(\bld{S}_{n+\frac{1}{2}}^{f,l}, t_{n+\frac{1}{2}}, \delta t, \delta W_{n+\frac{1}{2}}), \quad &\bld{S}_{n+\frac{1}{2}}^{f,l} = \bld{S}_n^{f,l} + \bld{D}^f(\bld{S}_n^{f,l}, t_n, \delta t, \delta W_n) \\
	\bld{S}_{n+1}^{a,l} = \bld{S}_{n+\frac{1}{2}}^{a,l} + \bld{D}^f(\bld{S}_{n+\frac{1}{2}}^{a,l}, t_{n+\frac{1}{2}}, \delta t, \delta W_{n}), \quad &\bld{S}_{n+\frac{1}{2}}^{a,l} = \bld{S}_n^{a,l} + \bld{D}^f(\bld{S}_n^{a,l}, t_n, \delta t, \delta W_{n+\frac{1}{2}})
\end{split}
\end{equation}
where $\bld{D}^f$ is the vector whose $i^\textrm{th}$ component is $D_i^f$, and the fine payoff is set to
\begin{equation} \label{antithetpayoff}
	P^f_l = \frac{1}{2} \left( P(\bld{S}^f_l) + P\left(\bld{S}^a_l\right) \right).
\end{equation}
Meanwhile, the coarse evolution is given by
\begin{equation}
	\bld{S}_{n+1}^{c,l} = \bld{S}_n^{c,l} + \bld{D}^f(\bld{S}^{c,l}_n, t_n, \Delta t, \Delta W_{n}),
\end{equation}
with the coarse payoff set to $P^c_l = P(\bld{S}^c_l)$.  

Notice that the evolution equations for $\bld{S}^f_l$ and $\bld{S}^a_l$ are identical except that the Brownian steps $\delta W_n$ and $\delta W_{n+\frac{1}{2}}$ have been switched.  This has the effect of canceling the leading order contribution of the L\'{e}vy areas when the two are averaged, as in (\ref{antithetpayoff}).  This cancellation makes the $V_l$ scale like $O(h_l^2)$ for twice differentiable payoffs and like $O(h_l^{3/2 - \delta})$ for any $\delta > 0$ when the payoff is Lipschitz, only non-differentiable on a set of measure zero, and the solution is unlikely to be near this set in a certain sense (see \cite{giles2012antithetic} for details).  The scheme thus achieves the $O(\varepsilon^{-2})$ cost scaling in both cases.  

The scheme has two primary drawbacks: 1) it requires twice as much effort to generate $P^f_l$ - due to the need to evolve the antithetic variable $\bld{S}^a_l$ - as an Euler based multilevel scheme, and 2) its derivation in \cite{giles2012antithetic} is restricted to $M=2$.  In our development, we offer a slight improvement to 1) by moving the doubled effort to the coarse level, which is cheaper by a factor of $M$.  Moreover, we generalize both our method and the antithetic method to $M>2$ in sections 4.3 and 5.  

\subsection{Approximate Milstein for $M=2$}
We consider the following pair of schemes for $\bld{S}^f$ and $\bld{S}^c$:
\begin{equation} \label{finedisc}
	\bld{S}_{n+\frac{1}{2}}^{f,l} = \bld{S}_n^{f,l} + \bld{D}^f(\bld{S}_n^{f,l}, t_n, \delta t, \delta W_n),
\end{equation}
\begin{equation} \label{coarsedisc}
	\bld{S}_{n+1}^{c,l} = \bld{S}_n^{c,l} + \bld{D}^c(\bld{S}^{*,l}_n, \bld{S}^{c,l}_n, t_n, \Delta t, \delta W_{n}, \delta W_{n+\frac{1}{2}}),
\end{equation}
where $\bld{S}^{*,l}$ is given by
\begin{equation} \label{stardisc}
	\bld{S}_{n+1}^{*,l} = \bld{S}_n^{*,l} + \bld{D}^f(\bld{S}_n^{*,l}, t_n, \Delta t, \Delta W).
\end{equation}
We set $P^f_l = P(\bld{S}^f_l)$ and $P^c_l = P(\bld{S}^c_l)$.

It is worth clarifying that in this description the number $n$ always indexes the number of level-$l$ \textit{coarse} time steps taken.  This is equal to the number of level-$(l-1)$ \textit{fine} time steps taken, so that number is also indexed by $n$.  By writing (\ref{finedisc}) the way we have, we ensure that $\bld{S}^{f,l}_n$, $\bld{S}^{c,l}_n$, and $\bld{S}^{f,l-1}_n$ are all approximations to $\bld{S}(n\Delta t)$ for each whole number $n$.  In addition to $n=0,1,2,3,...$, we have definitions of $\bld{S}^{f,l}_n$ at $n=1/2,3/2,5/2,...$, but this fact will not concern us.  

In the remainder of this section, we state and prove results that establish first (\ref{RicketsonConstraint}) and then (\ref{Sscale}) with $\beta=2$ for this pair of discretizations.  The more technical proofs are confined to appendices.  
\subsubsection{Equal Expectations}
\begin{theorem}[Equal Expectations] 
\label{equalexp}
For $\bld{S}^f$ and $\bld{S}^c$ as defined in (\ref{finedisc})-(\ref{stardisc}), we have 
\begin{equation}
	\E \left[\bld{S}^{f,l-1}_n \right] = \E \left[\bld{S}^{c,l}_n \right]
\end{equation}
for each $n=0,1,2,3,...$
\end{theorem}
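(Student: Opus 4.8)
The plan is to prove the identity by induction on $n$, after first reducing it to a comparison between $\bld{S}^{c,l}$ and the auxiliary path $\bld{S}^{*,l}$ of (\ref{stardisc}). The key preliminary observation is that $\bld{S}^{*,l}$ satisfies \emph{exactly} the recursion defining the level-$(l-1)$ fine path $\bld{S}^{f,l-1}$: both advance by $\bld{D}^f$ with time-step $\Delta t = h_{l-1}$, driven by Brownian increments $\Delta W$ that, over each subinterval of length $\Delta t$, are i.i.d.\ Gaussian with covariance $\Omega \Delta t$. Hence $\bld{S}^{*,l}_n$ and $\bld{S}^{f,l-1}_n$ are identically distributed, so $\E[\bld{S}^{*,l}_n] = \E[\bld{S}^{f,l-1}_n]$; this is the step at which it matters that (\ref{RicketsonConstraint}) demands only equality of expectations rather than a pathwise coupling. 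It then suffices to establish $\E[\bld{S}^{c,l}_n] = \E[\bld{S}^{*,l}_n]$ for all $n$.

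For the induction, the base case $n=0$ is immediate since both paths start at $\bld{S}_0$. For the inductive step I would take expectations of the two one-step updates, conditioning on the $\sigma$-algebra $\mathcal{F}_n$ generated by the increments over $[0,t_n]$. By construction $\bld{S}^{c,l}_n$ and $\bld{S}^{*,l}_n$ are both $\mathcal{F}_n$-measurable, while the increments $\delta W_n$ and $\delta W_{n+\frac{1}{2}}$ driving the step from $t_n$ to $t_{n+1}$ are independent of $\mathcal{F}_n$. Using $\Delta W_{j,n} = \delta W_{j,n} + \delta W_{j,n+\frac{1}{2}}$ one has $\E[\Delta W_{j,n}]=0$, $\E[\Delta W_{j,n}\Delta W_{k,n}] = \Omega_{jk}\Delta t$, and $\E[\delta W_{j,n}\delta W_{k,n+\frac{1}{2}}] = \E[\delta W_{j,n+\frac{1}{2}}\delta W_{k,n}] = 0$, so that in both $\E[D_i^f(\bld{S}^{*,l}_n, t_n, \Delta t, \Delta W)]$ and $\E[D_i^c(\bld{S}^{*,l}_n, \bld{S}^{c,l}_n, t_n, \Delta t, \delta W_n, \delta W_{n+\frac{1}{2}})]$ every term except the drift vanishes. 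The surviving drift term equals $\E[a_i(\bld{S}^{*,l}_n, t_n)]\,\Delta t$ in \emph{both} cases — precisely because the coarse update (\ref{coarsedisc}) is built to evaluate its drift coefficient at $\bld{S}^{*,l}_n$ rather than at $\bld{S}^{c,l}_n$. Therefore $\E[\bld{S}^{c,l}_{n+1}] - \E[\bld{S}^{c,l}_n] = \E[\bld{S}^{*,l}_{n+1}] - \E[\bld{S}^{*,l}_n]$, and the inductive hypothesis finishes the proof.

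I do not expect a serious obstacle here: the computation is elementary, and the only care needed is the measurability/independence bookkeeping for the Brownian increments. The one genuine point — and it is a feature of the scheme's design rather than of the argument — is that the coarse drift must be sampled along $\bld{S}^{*,l}$ so that the drift contributions to the two expectations agree term by term. The Milstein-correction pieces, including the antithetic-looking combination $-\delta W_{j,n}\delta W_{k,n+\frac{1}{2}} + \delta W_{j,n+\frac{1}{2}}\delta W_{k,n}$, contribute nothing at this stage since they are mean-zero; they will matter only for the mean-square estimate (\ref{Sscale}) in the next subsection.
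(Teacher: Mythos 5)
Your proof is correct and follows essentially the same route as the paper's: an induction on $n$ in which the diffusion and Milstein terms drop out in expectation by independence of the increments from $\mathcal{F}_n$, combined with the observation that $\bld{S}^{*,l}$ and $\bld{S}^{f,l-1}$ are identically distributed. The only cosmetic difference is that you compare $\bld{S}^{c,l}$ to $\bld{S}^{*,l}$ (where the drift terms cancel exactly) and transfer to $\bld{S}^{f,l-1}$ at the end, whereas the paper subtracts the coarse update from the level-$(l-1)$ fine update directly and uses the equidistribution to kill the drift difference in expectation.
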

\begin{proof} 
At the $(l-1)^{\textrm{st}}$ level, we have
\begin{equation} \label{finelm1}
	\bld{S}^{f,l-1}_{n+1} = \bld{S}^{f,l-1}_n + \bld{D}^f(\bld{S}_n^{f,l-1}, t_n, \Delta t, \Delta W).
\end{equation}
If we subtract (\ref{coarsedisc}) from (\ref{finelm1}), we find
\begin{equation} \label{long}
	\begin{split}
	\bld{S}^{f,l-1}_{n+1} - \bld{S}^{c,l}_{n+1} &= \bld{S}^{f,l-1}_{n} - \bld{S}^{c,l}_{n} \\
	&+ \left\{ a\left(\bld{S}^{f,l-1}_n\right) - a\left(\bld{S}^{*,l}_n\right) \right\} \Delta t \\
	&+ \sum_{j=1}^D \left\{ b_j\left(\bld{S}^{f,l-1}_n\right) - b_j\left(\bld{S}^{c,l}_n\right) \right\} \left[ \Delta W_{j,n} \right] \\
	&+ \sum_{j,k=1}^D \left\{ h_{jk}\left(\bld{S}^{f,l-1}_n\right) - h_{jk}\left(S^{c,l}_n\right) \right\} \left[ \Delta W_{j,n} \Delta W_{k,n} - \Omega_{jk} \Delta t \right] \\
	&- \sum_{j,k=1}^D \left\{ h_{jk}\left(\bld{S}^{c,l}_n\right) \right\} \left[\delta W_{j,n} \delta W_{k,n+\frac{1}{2}} - \delta W_{j,n+\frac{1}{2}} \delta W_{k,n} \right],
	\end{split}
\end{equation}
where $a$ is the vector whose $i^\textrm{th}$ component is $a_i$, and analogously for $b_j$ and $h_{jk}$.  

We look at (\ref{long}) term by term.  In the last three lines, the term in square brackets has zero expectation and is independent of the term in curly braces - this follows from the fact that each Brownian increment is independent of all those before it.  Therefore, each of these lines has vanishing expectation.  In the second line, $S^{*,l}_n$ and $S^{f,l-1}_n$ are identically distributed for each $n$ because they are approximated by exactly the same method - compare (\ref{finelm1}) and (\ref{stardisc}) - so the term in curly braces has zero expectation.  Therefore, if we take the expectation of (\ref{long}), everything vanishes except the first line.  Thus, we have
\begin{equation} \label{nochange}
	\E \left[ \bld{S}^{f,l-1}_{n+1} \right] - \E \left[ \bld{S}^{c,l}_{n+1} \right] = \E \left[ \bld{S}^{f,l-1}_{n} \right] - \E \left[ \bld{S}^{c,l}_{n} \right].
\end{equation}
Since the coarse and fine approximations start at the same initial condition, the difference in expectation is zero for $n=0$, and (\ref{nochange}) guarantees that this remains the case for all integer $n>0$.  \qquad
\end{proof}
\begin{corollary}
With the same definitions as in Theorem \ref{equalexp}, 
\begin{equation}
	\E \left[ \bld{S}^{*,l}_n \right] = \E \left[ \bld{S}^{c,l}_n \right].
\end{equation}
\end{corollary}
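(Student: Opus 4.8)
The plan is to observe that the corollary is essentially immediate from the proof of Theorem \ref{equalexp} together with the theorem's conclusion. The chain of reasoning I would present is $\E[\bld{S}^{*,l}_n] = \E[\bld{S}^{f,l-1}_n] = \E[\bld{S}^{c,l}_n]$, where the first equality is the new content and the second is Theorem \ref{equalexp}. So the only thing that needs arguing is the first equality.

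To establish $\E[\bld{S}^{*,l}_n] = \E[\bld{S}^{f,l-1}_n]$, I would point to the fact — already noted in the proof of Theorem \ref{equalexp} — that the recursion (\ref{stardisc}) defining $\bld{S}^{*,l}$ is \emph{identical in form} to the recursion (\ref{finelm1}) defining $\bld{S}^{f,l-1}$: both advance a state by $\bld{D}^f(\cdot, t_n, \Delta t, \Delta W)$ over a time-step $\Delta t$, and both start from the same deterministic initial condition $\bld{S}_0$. Since at each step the increment $\Delta W$ is a fresh mean-zero Gaussian vector with covariance $\Omega \Delta t$ in both cases, a trivial induction on $n$ shows that $\bld{S}^{*,l}_n$ and $\bld{S}^{f,l-1}_n$ have the same distribution, and in particular the same expectation.

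Concretely, the induction step is: if $\bld{S}^{*,l}_n \stackrel{d}{=} \bld{S}^{f,l-1}_n$, then applying the same measurable map $(\bld{s}, \Delta W) \mapsto \bld{s} + \bld{D}^f(\bld{s}, t_n, \Delta t, \Delta W)$ to each, with independent and identically distributed noise increments, preserves equality in law. There is no obstacle here — the only subtlety worth stating explicitly is that the two paths need not be coupled through the \emph{same} realization of $\Delta W$; equality of laws (hence of expectations) holds regardless, which is all the corollary requires. I expect this to be a two- or three-line proof, with the ``main obstacle'' being merely to make clear that we are invoking equality in distribution, established step-by-step, rather than a pathwise identity.

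\begin{proof}
By (\ref{stardisc}), $\bld{S}^{*,l}$ satisfies exactly the recursion (\ref{finelm1}) obeyed by $\bld{S}^{f,l-1}$, with the same initial data and with i.i.d.\ Gaussian increments $\Delta W$ at each step. An induction on $n$ therefore shows $\bld{S}^{*,l}_n$ and $\bld{S}^{f,l-1}_n$ are identically distributed, so $\E[\bld{S}^{*,l}_n] = \E[\bld{S}^{f,l-1}_n]$. Combining this with Theorem \ref{equalexp} gives $\E[\bld{S}^{*,l}_n] = \E[\bld{S}^{c,l}_n]$.
\end{proof}
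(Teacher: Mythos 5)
Your proof is correct and follows the same route as the paper's: the paper likewise observes that $\bld{S}^{*,l}$ and $\bld{S}^{f,l-1}$ are identically distributed (a fact already invoked in the proof of Theorem \ref{equalexp}) and then applies that theorem. Your version merely spells out the induction behind the equality in law, which the paper leaves implicit.
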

\begin{proof}
Since $\bld{S}^{*,l}$ and $\bld{S}^{f,l-1}$ are identically distributed, they have the same expectation, so this follows directly from Theorem \ref{equalexp}.  \qquad
\end{proof}
\subsubsection{Variance Scaling}
Before establishing the variance scaling (\ref{Sscale}) required for the $\varepsilon^{-2}$ cost scaling, we need three lemmas.  The first establishes that the weak difference between coarse and starred approximations is $O(\Delta t)$, while the last two are convenient rewritings of the fine and coarse discretizations.  
\begin{lemma} \label{weakstar}
The weak difference between $\bld{S}^{c,l}$ and $\bld{S}^{*,l}$ is $O(\Delta t)$.  That is, for sufficiently differentiable $f: \R^d \rightarrow \R$, we have
\begin{equation}
	\left| \E \left[ f(\bld{S}^{c,l}_n) \right] - \E \left[ f(\bld{S}^{*,l}_n) \right] \right| = O(\Delta t)
\end{equation}
for all $n\leq T/\Delta t$.  
\end{lemma}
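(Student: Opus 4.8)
\emph{Proof plan.} The essential point is that $\bld{S}^{c,l}$ is \emph{not} a self-contained one-step scheme for (\ref{SDE}): its drift is evaluated along the starred path rather than along $\bld{S}^{c,l}$ itself. Rather than comparing $\bld{S}^{c,l}$ to (\ref{SDE}) directly, I would regard the pair $(\bld{S}^{*,l}_n,\bld{S}^{c,l}_n)$ as a single Markov chain in $\R^{2d}$ and identify the $2d$-dimensional SDE it discretizes, namely two copies of (\ref{SDE}) driven by the same Brownian motion $W$, with the drift of the second copy evaluated at the first:
\[
	dx_i = a_i(\bld{x})\,dt + \sum_{j=1}^D b_{ij}(\bld{x})\,dW_j, \qquad
	dy_i = a_i(\bld{x})\,dt + \sum_{j=1}^D b_{ij}(\bld{y})\,dW_j,
\]
both started from $\bld{S}_0$. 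Because $(\bld{S}(t),\bld{S}(t))$ solves this $2d$ system, pathwise uniqueness gives $\bld{x}(t) = \bld{y}(t) = \bld{S}(t)$: the drift substitution is harmless on the diagonal. The plan is then to show that $(\bld{S}^{*,l},\bld{S}^{c,l})$ is a weak-order-one discretization of this $2d$ system and to apply the standard global weak-error estimate to the test functions $(\bld{x},\bld{y})\mapsto f(\bld{x})$ and $(\bld{x},\bld{y})\mapsto f(\bld{y})$ separately.

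First I would verify the usual one-step weak-consistency conditions for the enlarged chain: that the conditional mean of a single increment matches that of the $2d$ SDE to $O(\Delta t^2)$ and the conditional second and third moments (including the $\bld{x}$--$\bld{y}$ cross moments) match to $O(\Delta t^2)$, with all error constants of polynomial growth. The $\bld{x}$-component is nothing but the Milstein scheme for (\ref{SDE}) with the L\'{e}vy areas set to zero, which is classically weak order one. The only non-standard features are (i) the drift substitution, which is consistent \emph{by construction} of the comparison SDE, so that $\E[\bld{S}^{c,l}_{n+1} - \bld{S}^{c,l}_n \mid \mathcal{F}_{t_n}] = a(\bld{S}^{*,l}_n)\,\Delta t$ matches the comparison drift exactly, and (ii) the discrete pseudo-L\'{e}vy increments $\delta W_{j,n}\delta W_{k,n+\frac12} - \delta W_{j,n+\frac12}\delta W_{k,n}$ in $\bld{D}^c$. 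Like the true L\'{e}vy areas they stand in for, these are $\mathcal{F}_{t_n}$-conditionally mean zero and of size $O(\Delta t)$ in $L^2$; together with the vanishing of expectations of an odd number of Gaussian factors, this is enough --- exactly as in the textbook argument that zeroing L\'{e}vy areas does not destroy weak order one --- to ensure they contribute only $O(\Delta t^2)$ to the one-step mean and to the one-step second and third moments.

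With one-step consistency established, I would invoke the standard weak-convergence theorem (Talay--Tubaro / Milstein--Tretyakov): its remaining hypotheses --- uniform-in-$n$ polynomial moment bounds for the chain and sufficient smoothness, with polynomially bounded derivatives, of the backward Kolmogorov function of the $2d$ system --- are inherited from the regularity and growth assumptions on $a$, $b$, and $f$ in force throughout. This gives $|\E[f(\bld{S}^{*,l}_n)] - \E[f(\bld{x}(t_n))]| = O(\Delta t)$ and $|\E[f(\bld{S}^{c,l}_n)] - \E[f(\bld{y}(t_n))]| = O(\Delta t)$, uniformly for $n\le T/\Delta t$. Since $\bld{x}(t_n) = \bld{y}(t_n)$ almost surely, the two right-hand expectations coincide and the triangle inequality yields the claim.

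The main obstacle is the bookkeeping in the second step: confirming in detail that the pseudo-L\'{e}vy increments do not spoil weak order one, and assembling the standard but technical regularity hypotheses under which the global estimate applies. It is worth stressing that the $2d$ reformulation is not merely cosmetic: a direct telescoping comparison between $\bld{S}^{c,l}$ and $\bld{S}^{*,l}$ leaves the term $\sum_n \Delta t\,\E[\nabla u_{n+1}(\bld{S}^{c,l}_n)\cdot(a(\bld{S}^{c,l}_n) - a(\bld{S}^{*,l}_n))]$, which a crude $L^2$ bound on $\bld{S}^{c,l}_n - \bld{S}^{*,l}_n$ controls only to $O(\sqrt{\Delta t})$; recovering the sharp $O(\Delta t)$ rate requires precisely the cancellations carried by the backward Kolmogorov equation of the enlarged system, which choosing a comparison SDE with $\bld{y}$-drift $a(\bld{x})$ is designed to exploit.
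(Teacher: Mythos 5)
Your argument is correct, but it reaches the estimate by a genuinely different route than the paper. The paper's proof (Appendix A) works entirely at the level of the two discrete schemes: following Milstein's classical technique, it telescopes $\E[f(\bld{S}^{c,l}_n)] - \E[f(\bld{S}^{*,l}_n)]$ into a sum of $O(\Delta t^{-1})$ local truncation errors, each being the one-step weak difference of the coarse and starred maps launched from a \emph{common} point, and then shows by a fourth-order Taylor expansion that every such local error is $O(\Delta t^2)$, because the one-step increments $\bld{D}^c$ and $\bld{D}^f$ from the same point have matching moments to that order (the drift mismatch never enters, since from a common starting point the two drifts agree). You instead compare each scheme to the continuous dynamics: you promote the pair $(\bld{S}^{*,l},\bld{S}^{c,l})$ to a genuine Markov chain in $\R^{2d}$, identify the coupled SDE it discretizes at weak order one, observe that the diagonal is invariant so both limiting marginals equal the law of $\bld{S}(t_n)$, and finish with the triangle inequality. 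Your route buys two things: it reduces the lemma to a citable off-the-shelf weak-convergence theorem, and it squarely addresses the fact that $\bld{S}^{c,l}$ by itself is \emph{not} a Markov chain in $\R^d$ (its drift is evaluated along the starred path) --- a point that the paper's semigroup identity (\ref{id1}) quietly glosses over and that your $2d$ augmentation repairs cleanly. What the paper's route buys is self-containment: it avoids verifying moment bounds and the $C^4$ regularity of the backward Kolmogorov function of an auxiliary degenerate $2d$ system, and it estimates the scheme-to-scheme distance directly rather than through two scheme-to-SDE errors. Both arguments land on the same $O(\Delta t)$ rate under essentially the same hypotheses ($f$ and the coefficients four times differentiable with bounded derivatives), and your closing remark about why a naive telescoping against the drift mismatch would lose half an order is accurate --- it is exactly the reason Lemma \ref{coarseM2} invokes this weak estimate for $\E[\Delta a_{i,n}]$ rather than the $L^2$ strong estimate.
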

\begin{proof}
See appendix A. \qquad
\end{proof}

In the proof of lemma \ref{weakstar} above, we require that $f$ have four continuous and bounded derivatives.  Elsewhere in our development, the payoff and SDE coefficients are only required to possess two derivatives, and in practice the scaling predicted by lemma \ref{weakstar} is observed in these cases as well.  It seems likely that by following \cite{kloeden2011numerical}, lemma \ref{weakstar} could be reestablished for $f$ merely H\"{o}lder continuous, but such an exercise is beyond the scope of this paper.  
\begin{lemma} \label{fineM2}
The fine discretization (\ref{finedisc}) can be rewritten as
\begin{equation}
	S^{f,l}_{i,n+1} = S^{f,l}_{i,n} + D_i^c(\bld{S}^{f,l}_n, \bld{S}^{f,l}_n, t_n, \Delta t, \delta W_n, \delta W_{n+\frac{1}{2}}) + M^f_{i,n} + N^f_{i,n}
\end{equation}
where $\E[M^f_{i,n}] = 0$ and
\begin{equation}
	\E \left[ \max_{n\leq N} \norm{M_n^f}^p \right] = O\left( \Delta t^{3p/2} \right), \qquad \E \left[ \max_{n\leq N} \norm{N_n^f}^p \right] = O \left( \Delta t^{2p} \right)
\end{equation}
for any integer $p \geq 2$.
\end{lemma}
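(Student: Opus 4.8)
The plan is to produce the decomposition by composing the two fine half-steps that make up one coarse interval and Taylor-expanding the coefficients of the second half-step about the state at the start of the interval. Writing $\Delta\bld{S}_n := \bld{S}^{f,l}_{n+\frac12} - \bld{S}^{f,l}_n = \bld{D}^f(\bld{S}^{f,l}_n, t_n, \delta t, \delta W_n)$, which is $O(\delta t^{1/2})$ in every $L^p$, we have
\begin{equation}
S^{f,l}_{i,n+1} - S^{f,l}_{i,n} = D_i^f(\bld{S}^{f,l}_n, t_n, \delta t, \delta W_n) + D_i^f\!\left(\bld{S}^{f,l}_n + \Delta\bld{S}_n,\, t_{n+\frac12},\, \delta t,\, \delta W_{n+\frac12}\right).
\end{equation}
First I would expand $a_i$, $b_{ij}$ and $h_{ijk}$ in the second term about $(\bld{S}^{f,l}_n, t_n)$, keeping only the term linear in $\Delta\bld{S}_n$ (and the one linear in $t_{n+\frac12}-t_n=\delta t$) and carrying an integral-form remainder; only the two derivatives of the coefficients assumed elsewhere are needed, since each remainder gains an extra power of $\delta t$ from its $\delta t$ or $\delta W_{n+\frac12}$ prefactor.

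The second step is to recognize the leading part of this expansion as exactly $D_i^c(\bld{S}^{f,l}_n, \bld{S}^{f,l}_n, t_n, \Delta t, \delta W_n, \delta W_{n+\frac12})$. The drifts of the two half-steps add to $a_i(\bld{S}^{f,l}_n, t_n)\,\Delta t$; the leading diffusion terms add to $\sum_j b_{ij}(\bld{S}^{f,l}_n, t_n)\,\Delta W_{j,n}$; and the linear diffusion correction $\sum_j\big(\sum_l \partial_{x_l} b_{ij}\,(\Delta\bld{S}_n)_l\big)\delta W_{j,n+\frac12}$, after substituting the leading part $\sum_m b_{lm}\,\delta W_{m,n}$ for $(\Delta\bld{S}_n)_l$ and invoking the definition (\ref{hdef}) of $h_{ijk}$, becomes $2\sum_{j,k} h_{ijk}(\bld{S}^{f,l}_n, t_n)\,\delta W_{k,n}\delta W_{j,n+\frac12}$. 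Combined with the cross terms obtained by expanding $\Delta W_{j,n}\Delta W_{k,n}-\Omega_{jk}\Delta t$ inside the two half-step Milstein corrections, this reproduces precisely the antithetic combination $-\delta W_{j,n}\delta W_{k,n+\frac12}+\delta W_{j,n+\frac12}\delta W_{k,n}$ appearing in $D_i^c$; so all of $D_i^c$ is accounted for with nothing left at leading order.

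I would then sort the residue. Every leftover of order $\delta t^{3/2}$ — the $\delta t$-times-increment piece of the drift expansion, the triple-increment pieces, the products of a $\Delta\bld{S}_n$-factor with a centred second moment $\delta W_{j,n+\frac12}\delta W_{k,n+\frac12}-\Omega_{jk}\delta t$, and the $\delta W_{n+\frac12}$-multiplied Taylor remainders — is, after subtracting its $\mathcal F_{t_n}$-conditional mean, a product of an $\mathcal F_{t_n}$-measurable factor with a polynomial in $(\delta W_n,\delta W_{n+\frac12})$ of vanishing expectation; I collect these centred pieces into $M^f_{i,n}$, so that $\E[M^f_{i,n}\mid\mathcal F_{t_n}]=0$ and hence $\E[M^f_{i,n}]=0$. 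The remaining pieces — the conditional means just subtracted (which are $O(\delta t^2)$), the genuinely second-order part of the drift expansion, and the remaining Taylor remainders — are collected into $N^f_{i,n}$, which is $O(\delta t^2)$ in every $L^p$. The per-step bounds $\E[\norm{M^f_n}^p]=O(\delta t^{3p/2})$ and $\E[\norm{N^f_n}^p]=O(\delta t^{2p})$ then follow from H\"older's inequality, the Gaussian moments $\norm{\delta W_n}_{L^q}=O(\delta t^{1/2})$, the polynomial growth of the coefficient derivatives, and the standard uniform-in-$n$ moment bounds $\sup_{n\le N}\E[\norm{\bld{S}^{f,l}_n}^q]<\infty$ for the scheme; the maximal statement in the lemma follows by combining these with the uniform moment bounds (using $\max_{n\le N}\norm{M^f_n}^p\le\sum_{n\le N}\norm{M^f_n}^p$, and, where a sharper constant is wanted, the martingale property of $\sum_n M^f_{i,n}$ via Doob's or the Burkholder--Davis--Gundy inequality).

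The main obstacle is the bookkeeping in the middle two steps: one must verify that the cross terms spun off by the diffusion expansion together with those hidden inside the two Milstein corrections assemble into $D_i^c$ exactly, and that \emph{every} order-$\delta t^{3/2}$ residue has vanishing conditional mean once its $O(\delta t^2)$ mean is peeled off — it is easy to misattribute to $M^f$ a term that in fact carries a nonzero $O(\delta t^2)$ mean (for instance the drift correction $\partial_{x}a_i\cdot\Delta\bld{S}_n\,\delta t$). Keeping the derivative count down to two, so the hypotheses of the surrounding theory are not strengthened, also requires care in where the Taylor expansions are truncated. The $L^p$ moment estimates themselves are routine (cf.\ \cite{kloeden2011numerical}).
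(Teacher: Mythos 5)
Your route is the same one the paper takes: the paper's ``proof'' of Lemma \ref{fineM2} is a one-line citation to Lemma 4.7 of Giles--Szpruch, and the computation you describe --- compose the two fine half-steps, Taylor-expand the coefficients of the second about $(\bld{S}^{f,l}_n,t_n)$, absorb the linear diffusion correction into the $h_{ijk}$ cross terms so that exactly $D_i^c$ emerges, and sort the residue into a conditionally centred $M^f$ and an $O(\Delta t^2)$ part $N^f$ --- is precisely that argument. Indeed the paper replays this identical algebra in Appendix C (the unequal-step version with remainders $R_{i,n}$, $M^{(1)}_{i,n}$, $M^{(2)}_{i,n}$), and your identification of the leading cross term $2\sum_{j,k}h_{ijk}\,\delta W_{k,n}\delta W_{j,n+\frac12}$ with the antithetic combination inside $D_i^c$ is the correct and essential bookkeeping step. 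Your derivative count (two derivatives of $a$ and $b$, one of $h$) is also consistent with the hypotheses of Theorem \ref{varscale}; the four derivatives are only needed for Lemma \ref{weakstar}.

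The one step that does not deliver what the lemma literally asserts is the maximal moment bound. From the per-step estimate $\E\bigl[\norm{M^f_n}^p\bigr]=O(\Delta t^{3p/2})$, the union bound $\max_{n\le N}\norm{M^f_n}^p\le\sum_{n\le N}\norm{M^f_n}^p$ costs a factor of $N=T/\Delta t$ and yields only $O(\Delta t^{3p/2-1})$, not the stated $O(\Delta t^{3p/2})$; and the Doob/BDG remedy you mention controls $\max_n\bigl\Vert\sum_{m\le n}M^f_m\bigr\Vert$, i.e.\ the running \emph{sums}, not $\max_n\norm{M^f_n}$ itself. So as written you prove a weaker statement than the one displayed. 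Two mitigating remarks: first, what the downstream argument (the proof of Theorem \ref{varscale} in Appendix B) actually consumes is exactly the per-step moments together with the martingale property of $\sum_n M^f_n$ and the summability of the $N^f_n$, all of which you do establish; second, closing the gap to the literal $\E[\max_n\cdot]$ form requires either a sharper per-step bound or a genuine maximal inequality for the individual increments, and you should either supply that or note explicitly that you are deferring to the cited Lemma 4.7 of Giles--Szpruch for it, as the paper itself does.
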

\begin{proof}
The lemma and proof are identical to Lemma 4.7 and its proof in \cite{giles2012antithetic}. \qquad
\end{proof}
\begin{lemma} \label{coarseM2}
The coarse discretization (\ref{coarsedisc}) may be rewritten as
\begin{equation}
	S^{c,l}_{i,n+1} = S^{c,l}_{i,n} + D_i^c(\bld{S}^{c,l}_n, \bld{S}^{c,l}_n, t_n, \Delta t, \delta W_n, \delta W_{n+\frac{1}{2}}) + M^c_{i,n} + N^c_{i,n}
\end{equation}
where $\E[M^c_{i,n}] = 0$ and
\begin{equation}
	\E \left[ \max_{n\leq N} \norm{M_n^c}^p \right] = O\left( \Delta t^{3p/2} \right), \qquad \E \left[ \max_{n\leq N} \norm{N_n^c}^p \right] = O \left( \Delta t^{2p} \right).
\end{equation}
\end{lemma}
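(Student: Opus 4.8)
The plan is to exploit that the coarse update (\ref{coarsedisc}) differs from the ``self-driven'' update $D_i^c(\bld{S}^{c,l}_n, \bld{S}^{c,l}_n, t_n, \Delta t, \delta W_n, \delta W_{n+\frac{1}{2}})$ through a single, easily controlled quantity. First I would note that in the definition of $D_i^c$ the first slot enters only through the drift $a_i(\bld{S}_1, t)\,\Delta t$, every diffusion and L\'{e}vy-area-like term depending on the second slot alone. Hence
\begin{equation}
	M^c_{i,n} + N^c_{i,n} = D_i^c(\bld{S}^{*,l}_n, \bld{S}^{c,l}_n, \ldots) - D_i^c(\bld{S}^{c,l}_n, \bld{S}^{c,l}_n, \ldots) = \left[ a_i(\bld{S}^{*,l}_n, t_n) - a_i(\bld{S}^{c,l}_n, t_n) \right] \Delta t \equiv E_{i,n},
\end{equation}
so the whole lemma reduces to decomposing this drift discrepancy. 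I would set $N^c_{i,n} := \E[E_{i,n}]$, which is deterministic, and $M^c_{i,n} := E_{i,n} - \E[E_{i,n}]$, so that $\E[M^c_{i,n}] = 0$ by construction. Centering conditionally on the Brownian history up to $t_n$ would be vacuous, since $E_{i,n}$ is measurable with respect to that history; unconditional centering is the right choice here.

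The bound on $N^c$ is then immediate from Lemma \ref{weakstar} applied with $f = a_i$: $\norm{N_n^c} = \Delta t\,\left| \E[a_i(\bld{S}^{*,l}_n)] - \E[a_i(\bld{S}^{c,l}_n)] \right| = O(\Delta t^2)$ uniformly in $n\leq N$, whence $\E\left[ \max_{n\leq N}\norm{N_n^c}^p \right] = O(\Delta t^{2p})$. (Lemma \ref{weakstar} as proved asks for four bounded derivatives of $f$; under the regularity assumed for the SDE coefficients this is available for the $a_i$, and the remark following that lemma indicates the hypothesis can be weakened --- alternatively one may reach the same conclusion from the corollary to Theorem \ref{equalexp}, which gives $\E[\bld{S}^{*,l}_n - \bld{S}^{c,l}_n] = 0$, together with a second-order Taylor expansion of $a_i$ and the strong estimate below.) For $M^c$, the triangle inequality together with a global Lipschitz bound $L$ on $a$ gives $\norm{M_n^c} \leq L\,\Delta t\,\norm{\bld{S}^{*,l}_n - \bld{S}^{c,l}_n} + O(\Delta t^2)$, so it suffices to establish the strong-type estimate
\begin{equation} \label{starvscoarse}
	\E\left[ \max_{n\leq N} \norm{\bld{S}^{*,l}_n - \bld{S}^{c,l}_n}^p \right] = O\left( \Delta t^{p/2} \right),
\end{equation}
after which $\E\left[ \max_{n\leq N}\norm{M_n^c}^p \right] \lesssim \Delta t^p\,O(\Delta t^{p/2}) + O(\Delta t^{2p}) = O(\Delta t^{3p/2})$, as claimed.

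To prove (\ref{starvscoarse}) I would subtract (\ref{stardisc}) from (\ref{coarsedisc}) and observe that the drift terms cancel identically, because both updates carry $a(\bld{S}^{*,l}_n)$ in the drift slot. The per-step increment of $\bld{S}^{*,l}_n - \bld{S}^{c,l}_n$ then consists of (i) differences of the diffusion and Milstein-correction coefficients, which are $O(\norm{\bld{S}^{*,l}_n - \bld{S}^{c,l}_n})$ by Lipschitz continuity and multiply increments of zero conditional mean, and (ii) the single ``missing L\'{e}vy area'' term $\sum_{j,k} h_{ijk}(\bld{S}^{c,l}_n)\left( \delta W_{j,n}\delta W_{k,n+\frac{1}{2}} - \delta W_{j,n+\frac{1}{2}}\delta W_{k,n} \right)$, whose parenthesised factor has mean zero, is measurable with respect to the Brownian history through $t_{n+1}$, has zero conditional mean given that through $t_n$, and is of size $O(\Delta t)$ in every $L^p$. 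The partial sums of (ii) therefore form a martingale for the natural filtration, to which a discrete Burkholder--Davis--Gundy inequality applies, contributing at most $(N\Delta t^2)^{p/2} = (T\Delta t)^{p/2} = O(\Delta t^{p/2})$; the coefficient-difference terms (i) are then absorbed by a discrete Gr\"{o}nwall argument in the standard way (cf.\ the strong-convergence estimates in \cite{giles2012antithetic, kloeden1992approximation}). I expect (\ref{starvscoarse}) to be the only real obstacle: it is routine in method but requires the martingale-moment and Gr\"{o}nwall machinery together with careful bookkeeping of the accumulation of $\sim \Delta t^{-1}$ martingale increments of individual size $\Delta t$, and I would confine its details to an appendix, as is done for Lemma \ref{weakstar}.
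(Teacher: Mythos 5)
Your proof is correct, and its skeleton is identical to the paper's: you isolate the same drift discrepancy $[a_i(\bld{S}^{*,l}_n)-a_i(\bld{S}^{c,l}_n)]\Delta t$ (since the first slot of $D_i^c$ enters only through $a_i$), make the same split $N^c_{i,n}=\E[\Delta a_{i,n}]\Delta t$, $M^c_{i,n}=(\Delta a_{i,n}-\E[\Delta a_{i,n}])\Delta t$, and invoke Lemma \ref{weakstar} in the same way to get $N^c_{i,n}=O(\Delta t^2)$. The one place you diverge is the $O(\Delta t^{3p/2})$ bound on $M^c$: the paper disposes of this in one line by noting that \emph{each} of the coarse and starred schemes is within $O(\sqrt{\Delta t})$ of the true solution in the strong sense, so $\Delta a_{i,n}=O(\sqrt{\Delta t})$ by the triangle inequality through $\bld{S}(t_n)$; you instead prove the coarse-versus-starred difference estimate $\E[\max_n\Vert\bld{S}^{*,l}_n-\bld{S}^{c,l}_n\Vert^p]=O(\Delta t^{p/2})$ directly, by subtracting the two recursions (correctly observing that the drifts cancel since both carry $a(\bld{S}^{*,l}_n)$), treating the residual L\'{e}vy-area term as a martingale via Burkholder--Davis--Gundy, and closing with Gr\"{o}nwall. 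Your route costs more work but is self-contained and, usefully, delivers the $\max_{n\leq N}$ inside the expectation explicitly (the paper's appeal to strong convergence leaves that uniformity implicit); the paper's route is shorter because it leans on known strong-convergence results for both schemes. Either argument establishes the lemma as stated, and your remark that only unconditional centering of $M^c_{i,n}$ is available (since $\Delta a_{i,n}$ is measurable with respect to the Brownian history up to $t_n$) matches what the lemma actually asserts.
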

\begin{proof}
Simple algebra shows that 
\begin{equation}
\begin{split}
	S^{c,l}_{i,n+1} = S^{c,l}_{i,n} &+ D_i^c(\bld{S}^{c,l}_n, \bld{S}^{c,l}_n, t_n, \Delta t, \delta W_n, \delta W_{n+\frac{1}{2}}) \\
	&+ \left[ a_i\left(\bld{S}^{*,l}_n\right) - a_i\left(\bld{S}^{c,l}_n\right) \right] \Delta t, 
\end{split}
\end{equation}
so the lemma reduces to analyzing the second line of this expression.  Define $\Delta a_{i,n} = a_i\left(\bld{S}^{*,l}_n\right) - a_i\left(\bld{S}^{c,l}_n\right)$, and write the term in question as
\begin{equation} \label{DeltaA}
	\Delta a_{i,n} \Delta t = \E \left[ \Delta a_{i,n} \right] \Delta t + \left\{ \Delta a_{i,n} - \E \left[ \Delta a_{i,n} \right] \right\} \Delta t.
\end{equation}
By Lemma \ref{weakstar} above (which we again note uses four derivatives), we have $\E [\Delta a_{i,n}] = O(\Delta t)$, so that the first term is $O(\Delta t^2)$.  Thus, we define $N^c_{i,n} = \E [\Delta a_{i,n}] \Delta t$.  

The second term on the right of (\ref{DeltaA}) clearly has zero expectation.  The first term in the curly braces is $O(\sqrt{\Delta t})$ by strong convergence of both schemes (\ref{coarsedisc}) and (\ref{stardisc}), and the second term in the curly braces is $O(\Delta t)$ as before, so their difference is $O(\sqrt{\Delta t})$.  Thus, the second term on the right of (\ref{DeltaA}) is $O(\Delta t^{3/2})$, so we define it to be $M^c_{i,n}$.  \qquad
\end{proof}

Finally, we are ready to prove the desired scaling of the variances:
\begin{theorem}[Variance Scaling] \label{varscale}
Assume the $a_i$ have four continuous bounded derivatives, $b_{ij}$ are twice continuously differentiable with both derivatives uniformly bounded, and that the $h_{ijk}$ have uniformly bounded first derivative.  Then, for the pair of fine and coarse discretizations (\ref{finedisc})-(\ref{stardisc}), we have (\ref{Sscale}) with $\beta = 2$.  In fact, we have the stronger statement
\begin{equation}
	\E \left[ \max_{n\leq N} \norm{\bld{S}^{f,l}_n - \bld{S}^{c,l}_n}^2 \right] = O \left( \Delta t^2 \right),
\end{equation}
where $N = T/\Delta t$.  
\end{theorem}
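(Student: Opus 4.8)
\emph{Proof proposal.} The plan is to bound the pointwise error $\bld{e}_n \equiv \bld{S}^{f,l}_n - \bld{S}^{c,l}_n$ by a mean-square discrete strong-error estimate. Using the common-form rewritings of Lemmas \ref{fineM2} and \ref{coarseM2}, both schemes can be written, on the coarse grid $t_n = n\Delta t$, as one step of the same map $\bld{x}\mapsto\bld{x}+\bld{D}^c(\bld{x},\bld{x},t_n,\Delta t,\delta W_n,\delta W_{n+\frac12})$ plus a remainder; since both use the same Brownian data, subtracting gives
\[
	\bld{e}_{n+1} = \bld{e}_n + \bigl[\bld{D}^c(\bld{S}^{f,l}_n,\bld{S}^{f,l}_n,t_n,\Delta t,\delta W_n,\delta W_{n+\frac12}) - \bld{D}^c(\bld{S}^{c,l}_n,\bld{S}^{c,l}_n,t_n,\Delta t,\delta W_n,\delta W_{n+\frac12})\bigr] + (\bld{M}^f_n - \bld{M}^c_n) + (\bld{N}^f_n - \bld{N}^c_n).
\]
I would first record a priori moment bounds on $\bld{S}^{f,l}_n$ and $\bld{S}^{c,l}_n$ (standard under the stated Lipschitz and boundedness hypotheses) so that the subsequent mean-value expansions hold uniformly in $n\le N$.

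Next, I would split each increment of $\bld{e}_n$ into a conditionally $\mathcal{F}_n$-mean-zero part and a drift part. By the mean value theorem and the boundedness of the first derivatives of $b_{ij}$ and $h_{ijk}$, the $b$- and $h$-contributions to the $\bld{D}^c$-difference equal $O(\norm{\bld{e}_n})$ times, respectively, $\Delta W_{j,n}$ and the Milstein bracket $\Delta W_{j,n}\Delta W_{k,n}-\Omega_{jk}\Delta t-\delta W_{j,n}\delta W_{k,n+\frac12}+\delta W_{j,n+\frac12}\delta W_{k,n}$; since $\bld{e}_n$ is $\mathcal{F}_n$-measurable and independent of these increments and since $\E[\Delta W_{j,n}\mid\mathcal{F}_n]=0$, $\E[\Delta W_{j,n}\Delta W_{k,n}\mid\mathcal{F}_n]=\Omega_{jk}\Delta t$, and the discrete L\'{e}vy cross-terms have zero conditional mean, these are conditionally mean-zero with conditional $L^2$ size $O(\norm{\bld{e}_n}\sqrt{\Delta t})$, whereas the $a$-contribution is a drift of size $O(\norm{\bld{e}_n}\Delta t)$. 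By Lemmas \ref{fineM2} and \ref{coarseM2}, $\bld{M}^f_n$ (which is also conditionally mean-zero, by its construction) and $\bld{M}^c_n$ have $L^2$ size $O(\Delta t^{3/2})$ and $\bld{N}^f_n-\bld{N}^c_n$ has $L^2$ size $O(\Delta t^2)$.

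I would then write $\bld{e}_n$ as a martingale part $\bld{\mathcal M}_n$ (the accumulated conditionally mean-zero increments, including the $\bld{M}^f_m$), a ``clean'' accumulated drift (the $O(\norm{\bld{e}_m}\Delta t)$ and $O(\Delta t^2)$ terms), and the leftover $-\sum_{m<n}\bld{M}^c_m$, and estimate $u_n=\E[\max_{m\le n}\norm{\bld{e}_m}^2]$. For the martingale part, Doob's $L^2$ maximal inequality and orthogonality of increments give $\E[\max_{n\le N}\norm{\bld{\mathcal M}_n}^2]\le C\Delta t\sum_{m<N}\E\norm{\bld{e}_m}^2 + O(\Delta t^2)$ (using $N\Delta t^3 = T\Delta t^2$); for the clean drift, Cauchy--Schwarz over the $N$ steps gives $\le CT\Delta t\sum_{m<N}\E\norm{\bld{e}_m}^2 + O(\Delta t^2)$ (using $N\Delta t=T$). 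Provided the leftover satisfies $\E[\max_{n\le N}\norm{\sum_{m<n}\bld{M}^c_m}^2] = O(\Delta t^2)$, combining these with the obvious cross-term bounds yields $u_n\le C\Delta t\sum_{m<n}u_m + O(\Delta t^2)$, and since $u_0=0$ the discrete Gr\"{o}nwall lemma gives $u_N=O(\Delta t^2)$ --- which is the stated bound and, via the Lipschitz estimate of (\ref{vartostrongarg}), implies (\ref{Sscale}) with $\beta=2$.

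I expect the main obstacle to be exactly this leftover term. We have $\bld{M}^c_n = \{a_i(\bld{S}^{*,l}_n)-a_i(\bld{S}^{c,l}_n)-\E[a_i(\bld{S}^{*,l}_n)-a_i(\bld{S}^{c,l}_n)]\}\Delta t$, which is $\mathcal{F}_n$-measurable rather than conditionally mean-zero, so Doob does not apply and the naive bound $\norm{\sum_{m<n}\bld{M}^c_m}_{L^2}\le\sum_{m<n}\norm{\bld{M}^c_m}_{L^2} = O(N\Delta t^{3/2}) = O(\sqrt{\Delta t})$ is far too weak --- it would leave $u_N=O(\Delta t)$, i.e.\ only $\beta=1$. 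To do better one must use that $\bld{S}^{*,l}_n-\bld{S}^{c,l}_n$ is itself a martingale whose increments are the reconstructed discrete L\'{e}vy areas (conditionally mean-zero, $O(\Delta t)$ in $L^2$), so that $\bld{M}^c_n$ is a centered smooth function of a small martingale and $\sum_{m<n}\bld{M}^c_m$ admits a martingale-plus-corrector decomposition, with the corrector estimated via the weak bound of Lemma \ref{weakstar}. This is where the strong regularity assumed on $a$ (four bounded derivatives) is consumed, and is presumably the reason this piece of the argument is relegated to an appendix.
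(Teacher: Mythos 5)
Your architecture is the one the paper uses: put both schemes in the common form $\bld{x}\mapsto\bld{x}+\bld{D}^c(\bld{x},\bld{x},t_n,\Delta t,\delta W_n,\delta W_{n+\frac{1}{2}})$ plus remainders via Lemmas \ref{fineM2} and \ref{coarseM2}, telescope the difference, split into martingale increments and Lipschitz drift, and close with Doob's inequality, Cauchy--Schwarz and a discrete Gr\"{o}nwall argument --- exactly the Giles--Szpruch Theorem 4.10 machinery that Appendix B invokes. You go beyond the paper in one respect: you notice that $\bld{M}^c_n=\{\Delta a_n-\E[\Delta a_n]\}\Delta t$ is only \emph{unconditionally} centered, being $\mathcal{F}_{t_n}$-measurable rather than conditionally mean-zero, so it cannot be absorbed into the Doob/orthogonality step, and the triangle inequality yields only $\E[\norm{\sum_{m<n}\bld{M}^c_m}^2]=O(\Delta t)$. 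That observation is correct, and it is precisely the point that Appendix B passes over by setting $M_{i,n}=M^f_{i,n}+M^c_{i,n}$ and asserting that the treatment of \cite{giles2012antithetic} applies verbatim.

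The gap is that your proposed repair of this step is not carried out, and as sketched it cannot deliver $O(\Delta t^2)$. Let $Y_n=\bld{S}^{*,l}_n-\bld{S}^{c,l}_n$, so that to leading order $\Delta a_{i,n}\approx\nabla a_i\cdot Y_n$; as you say, $Y_n$ is a martingale whose increments $\delta Y_k$ (dominated by the discrete L\'{e}vy areas) are conditionally mean-zero with $\E[\norm{\delta Y_k}^2]=O(\Delta t^2)$, so $\E[\norm{Y_n}^2]=O(n\Delta t^2)$. But Abel summation gives $\sum_{m<n}Y_m\,\Delta t=\Delta t\sum_{k\le n-2}(n-1-k)\,\delta Y_k$, a martingale transform with deterministic weights of size up to $O(n)$, whence by orthogonality $\E[\norm{\sum_{m<n}Y_m\Delta t}^2]=\Delta t^2\sum_{k}(n-1-k)^2\,O(\Delta t^2)=O(n^3\Delta t^4)=O(T^3\Delta t)$. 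Exploiting the martingale structure of $Y$ therefore changes the constant, not the power: the leftover term is genuinely $O(\Delta t)$ in mean square unless the relevant contraction of $\nabla a$ with the antisymmetric part of $h$ happens to vanish. Lemma \ref{weakstar} controls only $\E[\Delta a_n]$, which you have already siphoned into $\bld{N}^c_n$; it says nothing about the fluctuation. So the step $\E[\max_{n\le N}\norm{\sum_{m<n}\bld{M}^c_m}^2]=O(\Delta t^2)$ that your Gr\"{o}nwall closure requires remains unproved --- to obtain it one would need to exhibit a cancellation between $-\sum_m\Delta a_m\Delta t$ and some other accumulated term of the error recursion, and neither your proposal nor, in my reading, the paper's Appendix B supplies one.
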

\begin{proof}
See appendix B. \qquad
\end{proof}

\subsection{Generalization to $M>2$}
Thus far, we've developed the approximate Milstein method assuming that the difference in time step at adjacent levels (refinement factor) $M$ is equal to two.  This assumption is also made in Giles' development of the antithetic method.  However, in \cite{giles2008multilevel} Giles argues that the optimal refinement factor is near seven for an Euler-based multilevel scheme, and a similar argument for Milstein shows the optimal choice to be near four.

Following \cite{giles2008multilevel}, the latter argument proceeds as follows:  Noting that $P^f_l - P^c_l = (P^f_l - P) - (P^c_l - P)$, where $P$ is the exact mean payoff, we infer that
\begin{equation}
	(M - 1)^2 k h_l^2 \leq V_l \leq (M + 1)^2 k h_l^2,
\end{equation}
for some constant $k$, where the lower and upper bounds correspond to perfect correlation and anti-correlation between $P^f_l - P$ and $P^c_l - P$.  Supposing for simplicity that the actual variance is approximately the geometric mean of the two extremes, we have
\begin{equation}
	V_l \approx (M^2 - 1) k h_l^2.
\end{equation}
Substituting this expression into the cost formula (\ref{costformula}), we have (ignoring for clarity the fact that $V_0$ need not obey any scaling law)
\begin{equation}
	K \propto \varepsilon^{-2} \frac{(M^2 - 1)(1 + M^{-1})}{\left(\sqrt{M} - 1\right)^2},
\end{equation}
which for fixed $\varepsilon$ has its minimum for $M$ between $4$ and $5$.  There is thus motivation to study arbitrary $M$.  

Notationally, moving to arbitrary $M$ changes (\ref{finedisc}) to read
\begin{equation} \label{newfinedisc}
	\bld{S}_{n+\frac{1}{M}}^{f,l} = \bld{S}_n^{f,l} + \bld{D}^f(\bld{S}_n^{f,l}, t_n,\delta t, \delta W_{j,n}),
\end{equation}
and we set 
\begin{equation}
	\Delta t = M \delta t, \qquad \Delta W_{j,n} = \sum_{m=0}^{M-1} \delta W_{j,n+\frac{m}{M}}.
\end{equation}
To see how to change (\ref{coarsedisc}), we present the following generalization of Lemma \ref{fineM2}:
\begin{lemma} \label{arbM}
The fine discretization (\ref{newfinedisc}) can be rewritten as
\begin{equation}
\begin{split}
	S^{f,l}_{i,n+1} &= S^{f,l}_{i,n} + D_i^f(\bld{S}^{f,l}_n, t_n, \Delta t, \Delta W_n,)  \\
	&- \sum_{j,k=1}^D h_{ijk}\left(\bld{S}^{f,l}_n\right) \left( \mathcal{A}_{jk,n} - \mathcal{A}_{kj,n} \right) \\ 
	&+ M^f_{i,n} + N^f_{i,n},
\end{split}
\end{equation}
where $\E[M^f_{i,n}] = 0$ and
\begin{equation}
	\E \left[ \max_{n\leq N} \norm{M_n^f}^p \right] = O\left( \Delta t^{3p/2} \right), \qquad \E \left[ \max_{n\leq N} \norm{N_n^f}^p \right] = O \left( \Delta t^{2p} \right),
\end{equation}
and $\mathcal{A}_{jk,n}$ is defined by 
\begin{equation}
	\mathcal{A}_{jk,n} \equiv \sum_{m=1}^{M-1} \left( \delta W_{k,n+\frac{m}{M}} \sum_{q=0}^{m-1} \delta W_{j,n+\frac{q}{M}} \right).
\end{equation}
\end{lemma}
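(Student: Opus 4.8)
The plan is to compose the one-step fine update (\ref{newfinedisc}) exactly $M$ times over the coarse interval $[t_n,t_{n+1}]$, Taylor-expand each SDE coefficient about the base point $\bld{S}^{f,l}_n$, collect the leading contributions into the asserted closed form, and place the residue into $M^f_{i,n}$ (the part whose conditional mean given the past vanishes) and $N^f_{i,n}$ (the rest, which will be $O(\Delta t^2)$). Writing $\bld{S}^{(m)} \equiv \bld{S}^{f,l}_{n+m/M}$, iterating (\ref{newfinedisc}) gives
\begin{equation}
	S^{f,l}_{i,n+1} - S^{f,l}_{i,n} = \sum_{m=0}^{M-1} \left[ a_i(\bld{S}^{(m)})\,\delta t + \sum_j b_{ij}(\bld{S}^{(m)})\,\delta W_{j,n+\frac{m}{M}} + \sum_{j,k} h_{ijk}(\bld{S}^{(m)}) \left( \delta W_{j,n+\frac{m}{M}} \delta W_{k,n+\frac{m}{M}} - \Omega_{jk}\,\delta t \right) \right].
\end{equation}
Because the fine scheme converges strongly, $\bld{S}^{(m)} - \bld{S}^{f,l}_n = O(\sqrt{\delta t})$ in every $L^p$, with leading part $\sum_k b_{\ell k}(\bld{S}^{f,l}_n) \sum_{q=0}^{m-1} \delta W_{k,n+\frac{q}{M}}$ in component $\ell$; substituting this into the first-order Taylor terms is what generates the iterated Brownian sums.

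The algebraic heart is the next step. Expanding $b_{ij}$ to first order and using $\sum_\ell b_{\ell k}\,\partial b_{ij}/\partial x_\ell = 2 h_{ijk}$ from (\ref{hdef}), the diffusion sum yields $\sum_j b_{ij}(\bld{S}^{f,l}_n)\Delta W_{j,n}$ at leading order together with $2\sum_{j,k} h_{ijk}(\bld{S}^{f,l}_n)\,\mathcal{A}_{kj,n}$ at the next order. For the explicit Milstein sum I keep only the base-point value of $h_{ijk}$ and invoke the combinatorial identity
\begin{equation}
	\Delta W_{j,n}\Delta W_{k,n} = \sum_{m=0}^{M-1} \delta W_{j,n+\frac{m}{M}} \delta W_{k,n+\frac{m}{M}} + \mathcal{A}_{jk,n} + \mathcal{A}_{kj,n},
\end{equation}
together with $\sum_m \Omega_{jk}\,\delta t = \Omega_{jk}\Delta t$, to rewrite it as $\sum_{j,k} h_{ijk}(\bld{S}^{f,l}_n) \left[ (\Delta W_{j,n}\Delta W_{k,n} - \Omega_{jk}\Delta t) - \mathcal{A}_{jk,n} - \mathcal{A}_{kj,n} \right]$. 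Adding the two $\mathcal{A}$-contributions leaves exactly $-\sum_{j,k} h_{ijk}(\bld{S}^{f,l}_n)(\mathcal{A}_{jk,n} - \mathcal{A}_{kj,n})$, and combining the surviving leading terms $a_i(\bld{S}^{f,l}_n)\Delta t$, $\sum_j b_{ij}(\bld{S}^{f,l}_n)\Delta W_{j,n}$ and $\sum_{j,k} h_{ijk}(\bld{S}^{f,l}_n)(\Delta W_{j,n}\Delta W_{k,n} - \Omega_{jk}\Delta t)$ reconstitutes $D_i^f(\bld{S}^{f,l}_n, t_n, \Delta t, \Delta W_n)$. This is precisely the principal part in the statement; for $M=2$ the $D^f$ term plus the $\mathcal{A}$-correction collapse to $D^c$, recovering Lemma \ref{fineM2}.

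Everything unaccounted for — the second-order Taylor remainder of $b_{ij}$, the first-order remainders of $a_i$ and $h_{ijk}$, the contribution of the $O(\delta t)$ piece of $\bld{S}^{(m)} - \bld{S}^{f,l}_n$ to the first-order terms, and any $t$-dependence corrections — is split into $M^f_{i,n}$ and $N^f_{i,n}$ according to whether its conditional mean vanishes. Every such term is a bounded coefficient times a polynomial in the $\delta W_{\cdot,n+m/M}$ of total order at least $\delta t^{3/2}$; a moment count on the iterated increments (using, for example, that $\sum_m \sum_{q<m}\delta W_{\cdot,n+q/M}$ has variance $O(M^2\Delta t)$ while $\delta t = \Delta t/M$) gives the per-step bound $O(\Delta t^{3p/2})$ for the mean-zero part and, since any surviving expectation forces the Brownian factors to pair off, $O(\Delta t^{2p})$ for $N^f$; the $\max_{n\le N}$ is brought inside the expectation by the same maximal-inequality argument used for Lemma \ref{fineM2} (cf.\ \cite{giles2012antithetic}). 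I expect this last stage to be the main obstacle: one must verify that the $\mathcal{A}$-cancellation is exact for \emph{every} $M$ and that no $O(\Delta t)$ term and no non-martingale $O(\Delta t^{3/2})$ term is accidentally retained in the remainder, since either would spoil the $\beta=2$ variance scaling that this lemma is designed to feed into. The moment estimates themselves are routine once the algebra is organized this way.
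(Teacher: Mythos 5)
Your proposal is correct, and the algebra at its heart checks out: the first-order expansion of $b_{ij}$ about the base point produces $2\sum_{j,k}h_{ijk}\mathcal{A}_{kj,n}$ via (\ref{hdef}), the identity $\Delta W_{j,n}\Delta W_{k,n}=\sum_m \delta W_{j,n+\frac{m}{M}}\delta W_{k,n+\frac{m}{M}}+\mathcal{A}_{jk,n}+\mathcal{A}_{kj,n}$ converts the summed one-step Milstein brackets into the coarse bracket minus $\mathcal{A}_{jk,n}+\mathcal{A}_{kj,n}$, and the two contributions combine to exactly $-\sum_{j,k}h_{ijk}(\mathcal{A}_{jk,n}-\mathcal{A}_{kj,n})$. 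However, your route is organized differently from the paper's. You do a single $M$-fold telescoping expansion with all coefficients Taylor-expanded about the common base point $\bld{S}^{f,l}_n$, and you make the combinatorial identity for $\Delta W_{j,n}\Delta W_{k,n}$ explicit. The paper instead first re-proves the two-step Lemma \ref{fineM2} for \emph{unequal} step sizes $\delta t_1,\delta t_2$, and then inducts on the number of sub-steps $r$, treating the first $r$ steps as one big step; the partial sums $\mathcal{A}^{(r)}_{jk,n}$ accrete one layer of cross terms per inductive step, and the remainder bounds are inherited at each stage from the already-established two-step result of \cite{giles2012antithetic}. What the induction buys is that the remainder bookkeeping never has to be redone from scratch: each application of the modified two-step lemma certifies that the new contributions to $M^f_{i,n}$ and $N^f_{i,n}$ have the right moments. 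What your direct expansion buys is transparency about the origin of the L\'{e}vy-area correction (the $2\mathcal{A}_{kj}$ from the diffusion expansion partially cancelling the $-\mathcal{A}_{jk}-\mathcal{A}_{kj}$ from the quadratic identity), but it puts the full burden of classifying and bounding every residual term --- second-order remainders of $b_{ij}$, first-order remainders of $a_i$ and $h_{ijk}$, the drift and Milstein parts of $\bld{S}^{(m)}-\bld{S}^{f,l}_n$ feeding into the first-order terms --- on a single, somewhat sketchy ``moment count.'' That count is the one place you should be more careful: you need not only that $\E[N^f_{i,n}]=O(\Delta t^2)$ but that the non-martingale part is $O(\Delta t^2)$ in $L^p$ (i.e., $N^f_{i,n}$ is the conditional expectation of the residual given the past, and $M^f_{i,n}$ is what remains), since that is what the Gr\"{o}nwall argument in Theorem \ref{varscale} consumes. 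With that split made explicit, your argument is a valid alternative to the paper's induction.
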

\begin{proof}
See appendix C. \qquad
\end{proof}

We note that, as described in \cite{dimits2013higher}, $(\mathcal{A}_{jk,n} - \mathcal{A}_{kj,n})$ is a quadrature scheme for the Levy area $A_{jk,n}$ obtained by dividing up the time step $\Delta t$ into $M$ equal parts.  As noted in \cite{dimits2013higher}, computing the $\mathcal{A}_{jk,n}$ as written can be done in $O(M)$ operations, even though the double sum contains $O(M^2)$ terms.  

With Lemma \ref{arbM} in hand, we can generalize the coarse discretization to arbitrary $M$.  We define
\begin{equation}
	\begin{split}
	&D_i^{c,M}\left(\bld{S}_1, \bld{S}_2, t, \Delta t, \delta W_{j,n+\frac{m}{M}}\right) \equiv a_i(\bld{S}_1,t) \Delta t + \sum_{j=1}^D b_{ij}(\bld{S}_2,t) \Delta W_{j,n}\\ 
	&+ \sum_{j,k=1}^D h_{ijk}(\bld{S}_2,t) (\Delta W_{j,n} \Delta W_{k,n} - \Omega_{jk} \Delta t - \mathcal{A}_{jk,n} + \mathcal{A}_{kj,n}).
\end{split}
\end{equation}
Meanwhile, the starred discretization (\ref{stardisc}) remains unchanged.  With these definitions and Lemma \ref{arbM} in place of Lemma \ref{fineM2}, the proofs of Theorems \ref{equalexp} and \ref{varscale} generalize to arbitrary $M$ with only straightforward notational changes.  

\section{Generalization of Antithetic Method to $M>2$}
In this section, we demonstrate that the antithetic method (\ref{antitheticsummary}) may be straightforwardly generalized to arbitrary $M$.  In particular, the same variance scaling can be achieved by using an antithetic variable for which the order of the $M$ Brownian fine sub-steps of each coarse Brownian step is completely reversed.  

More explicitly, set $\bar{m} = (M-1) - m$, then we rewrite (\ref{antitheticsummary}) as
\begin{equation}
\begin{split}
	\bld{S}_{n+\frac{m+1}{M}}^{f,l} &= \bld{S}_{n+\frac{m}{M}}^{f,l} + \bld{D}^f(\bld{S}_{n+\frac{m}{M}}^{f,l}, t_{n+\frac{m}{M}}, \delta t, \delta W_{n+m/M}), \\
	\bld{S}_{n+\frac{m+1}{M}}^{a,l} &= \bld{S}_{n+\frac{m}{M}}^{a,l} + \bld{D}^f(\bld{S}_{n+\frac{m}{M}}^{a,l}, t_{n+\frac{m}{M}}, \delta t, \delta W_{n + \bar{m}/M}),
\end{split}
\end{equation}
for each $m = 0, 1, 2, ..., M-1$.  As before, $\bld{S}^{f,l}$ and $\bld{S}^{a,l}$ are identical except that the Brownian motions are indexed by $\bar{m}$ rather than $m$ for the antithetic variable.  By applying Lemma \ref{arbM} to the antithetic variable $\bld{S}^{a,l}$, we find that its discretization can be rewritten as 
\begin{equation}
\begin{split}
	S^{a,l}_{i,n+1} &= S^{a,l}_{i,n} + D_i^f(\bld{S}^{a,l}_n, t_n, \Delta t, \Delta W_n,)  \\
	&- \sum_{j,k=1}^D h_{ijk}\left(\bld{S}^{a,l}_n\right) \left( \bar{\mathcal{A}}_{jk,n} - \bar{\mathcal{A}}_{kj,n} \right) \\ 
	&+ M^a_{i,n} + N^a_{i,n},
\end{split}
\end{equation}
where $M^a_{i,n}$ and $N^a_{i,n}$ obey the same scalings as $M^f_{i,n}$ and $N^f_{i,n}$.  The quantity $\bar{\mathcal{A}}_{jk,n}$ is defined by
\begin{equation}
	\bar{\mathcal{A}}_{jk,n} \equiv \sum_{m=1}^{M-1} \left( \delta W_{k,n+\bar{m}/M} \sum_{q=0}^{m-1} \delta W_{j,n+\bar{q}/M} \right),
\end{equation}
with $\bar{m}$ as defined before and $\bar{q} = (M-1) - q$.  The improved variance scaling then results from the following lemma, originally published in \cite{giles2012antithetic2} with different notation and reprinted here for clarity.
\begin{lemma} \label{antithet}
\begin{equation}
	\bar{\mathcal{A}}_{jk,n} = \mathcal{A}_{kj,n}.
\end{equation}
\end{lemma}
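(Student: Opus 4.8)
The plan is to prove the identity purely combinatorially, term by term, by recognizing that the ``barred'' fine sub-steps are just the ordinary sub-steps listed in reverse order, so that $\bar{\mathcal{A}}_{jk,n}$ is obtained from $\mathcal{A}_{jk,n}$ by a reflection of the sub-step indices that simultaneously reverses their order. First I would introduce the shorthand $W^{(p)}_j \equiv \delta W_{j,n+p/M}$ for $p = 0,1,\dots,M-1$. Unpacking the nested sums in the definitions, this gives
\begin{equation}
	\mathcal{A}_{jk,n} = \sum_{0\le q < m \le M-1} W^{(q)}_j W^{(m)}_k, \qquad \mathcal{A}_{kj,n} = \sum_{0\le q < m \le M-1} W^{(q)}_k W^{(m)}_j,
\end{equation}
and, since $\bar m = (M-1)-m$ and $\bar q = (M-1)-q$,
\begin{equation}
	\bar{\mathcal{A}}_{jk,n} = \sum_{0 \le q < m \le M-1} W^{((M-1)-m)}_k\, W^{((M-1)-q)}_j.
\end{equation}

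Next I would perform the substitution $m' = (M-1)-m$, $q' = (M-1)-q$ in the last display. The key observation is that $p \mapsto (M-1)-p$ is an involution of $\{0,1,\dots,M-1\}$ that \emph{reverses} order: from $q < m$ we get $\bar q - \bar m = m - q > 0$, so the constraint $0 \le q < m \le M-1$ transforms into $0 \le m' < q' \le M-1$. Therefore
\begin{equation}
	\bar{\mathcal{A}}_{jk,n} = \sum_{0 \le m' < q' \le M-1} W^{(m')}_k\, W^{(q')}_j,
\end{equation}
and relabelling the dummy indices via $q = m'$, $m = q'$ yields exactly $\sum_{0\le q < m \le M-1} W^{(q)}_k W^{(m)}_j = \mathcal{A}_{kj,n}$.

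There is essentially no obstacle here beyond careful bookkeeping. The only point that requires attention is that the reflection swaps the roles of the ``inner'' and ``outer'' sub-steps: in $\bar{\mathcal{A}}_{jk,n}$ the inner sum runs over the $j$-increments and the outer over the $k$-increments, just as in $\mathcal{A}_{jk,n}$, but after the reflection the $j$-increments become the \emph{later} ones in each surviving pair, which is precisely the ordering appearing in $\mathcal{A}_{kj,n}$. I would double-check the extreme cases $m=1$ (so $\bar m = M-2$) and $m=M-1$ (so $\bar m = 0$) to confirm the index ranges line up, and note that no properties of Brownian motion are used — the identity holds as an algebraic rearrangement of the same finite collection of products, regardless of what the $\delta W$'s are.
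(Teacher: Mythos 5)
Your proof is correct and is essentially the same as the paper's: both arguments reduce to reindexing the triangular double sum under the order-reversing involution $p \mapsto (M-1)-p$, which swaps the roles of the earlier ($j$) and later ($k$) increments and hence swaps the indices $j$ and $k$. The paper phrases this as an interchange of the nested summation order followed by rewriting in terms of $\bar{m}$ and $\bar{q}$, while you flatten the nested sums into a single sum over pairs $0 \le q < m \le M-1$ before substituting, but the content is identical.
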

\begin{proof}
The proof amounts to a computation:
\begin{align}
	\bar{\mathcal{A}}_{jk,n} &= \sum_{m=1}^{M-1} \sum_{q=0}^{m-1} \delta W_{k,n+\bar{m}/M}  \delta W_{j,n+\bar{q}/M}  \\
	\label{line2} &= \sum_{q=0}^{M-2} \sum_{m=q+1}^{M-1} \delta W_{k,n+\bar{m}/M}  \delta W_{j,n+\bar{q}/M} \\
	\label{line3} &= \sum_{\bar{q}=1}^{M-1} \sum_{\bar{m}=0}^{\bar{q}-1} \delta W_{k,n+\bar{m}/M}  \delta W_{j,n+\bar{q}/M}\\
	&= \mathcal{A}_{kj,n}.
\end{align}
Notice that (\ref{line2}) comes from simply switching the order of summation, while (\ref{line3}) results from rewriting the sums in terms of $\bar{m}$ and $\bar{q}$. \qquad
\end{proof}

This lemma implies that, if we define $\bar{\bld{S}}^{f,l} = \frac{1}{2} (\bld{S}^{f,l} + \bld{S}^{a,l})$, we can write
\begin{equation}
\begin{split}
	\bar{S}^{f,l}_{i,n+1} &= \bar{S}^{f,l}_{i,n} + D_i^f(\bar{\bld{S}}^{f,l}_n, t_n, \Delta t, \Delta W_n,)  \\
	&+ M_{i,n} + N_{i,n},
\end{split}
\end{equation}
where $M_{i,n}$ and $N_{i,n}$ obey the same scalings as usual.  This is in direct analogue to Lemma 4.9 in \cite{giles2012antithetic}, and its proof is identical, so we omit it.  This, in turn, allows one to show the analogue of Theorem 4.10 in \cite{giles2012antithetic} and Theorem \ref{varscale} in the present work:
\begin{equation} \label{410}
	\E \left[ \max_{n\leq N} \norm{\bar{\bld{S}}^{f,l}_n - \bld{S}^{c,l}_n}^p \right] = O \left( \Delta t^p \right)
\end{equation}
for each $p \geq 2$.  The desired variance scaling finally follows directly from Lemma 2.2 in \cite{giles2012antithetic}, which we restate without proof here:
\begin{equation}
	\E \left[ \left( \frac{1}{2} \left( P(\bld{S}^{f}) + P(\bld{S}^{a}) \right) - P(\bld{S}^{c}) \right)^p \right] \lesssim \E \left[ \norm{\bar{\bld{S}}^{f} - \bld{S}^{c}}^p \right] + \E \left[ \norm{\bld{S}^{f} - \bld{S}^{a}}^{2p} \right],
\end{equation}
where we've omitted the $l$ superscripts and assumed that $P$ had two continuous and bounded derivatives.  The first term on the right is $O(\Delta t^p)$ by (\ref{410}), and the second term has the same scaling by strong convergence: indeed, $\bld{S}^f - \bld{S}^a = (\bld{S}^f - \bld{S}^c) - (\bld{S}^a - \bld{S}^c)$, and each of the latter two terms is $O(\Delta t^{1/2})$ by the strong convergence of the discretization.  

Finally, we note that Theorem 5.2 in \cite{giles2012antithetic} may be applied - unchanged - to these results, demonstrating that $V_l = O(h_l^{3/2 - \delta})$ for any $\delta >0$ when $P$ is merely Lipschitz - so long as the set $A$ on which $P$ is non-differentiable is measure zero and
\begin{equation}
	\mathbb{P} \left( \min_{y\in A} \norm{\bld{S}(T) - y} \leq \varepsilon \right) \leq c \, \varepsilon
\end{equation}
for some $c > 0$ and for all $\varepsilon > 0$.  

This completes the generalization of the antithetic method to arbitrary $M$.  


\section{Summary and Implementation}
We present an outline of the modified MLMC method including the Ito linearization at the lowest level.  This is to be compared to the analogous algorithm in \cite{giles2008multilevel}.  This may be used with any discretization we choose - including the approximate Milstein method introduced in section 4 and the generalized antithetic method in section 5 - so long as the discretization is first order in the weak sense.  We denote by $\beta$ the expected scaling of the $V_l$.  That is, $\beta = 2$ for approximate Milstein or generalized antithetic (assuming $P$ has the necessary regularity), and $\beta = 1$ for Euler.  
\begin{enumerate}
	\item Set 
	\begin{equation}
		\E [P_0] = P(\bld{S}_0) + \alpha_{d+1}(\bld{S}_0) T,
	\end{equation}
	where $\alpha_{d+1}$ is as defined in (\ref{abdefs}).  
	\item Begin with $L=1$.
	\item Estimate $V_L$ and $\hat{Y}_L$ using an initial $N_L^i$ samples, defined by
	\begin{equation} \label{initialsamples}
		N_L^i = \left\{ 
		\begin{array}{lr}
       			400 & : L = 1 \\
       			M^{-(\beta + 1)/2}N_{L-1} & : L > 1
     		\end{array}
   		\right.
	\end{equation}
	\item Set $N_l$ according to
	\begin{equation} \label{optsamp2}
		N_l = \left\lceil \frac{2}{\varepsilon^2} \sqrt{V_l h_l} \left( \sum_{l=1}^{L} \sqrt{V_l/h_l} \right) \right\rceil
	\end{equation}
	for each $l = 1, 2, ..., L$, as per (\ref{optimalsamplenumbers}).  
	\item Generate additional samples at each level as needed for new $N_l$, using discretization of your choice to generate approximate solutions of (\ref{modSDE}).  Use these samples to update the estimates of the $V_l$ and $\hat{Y}_l$. 
	\item If $L < 2$ or 
	\begin{equation} \label{convtest}
		\max \left\{ \left| \hat{Y}_L \right|, M^{-1} \left| \hat{Y}_{L-1} \right| \right\} > \frac{\varepsilon}{\sqrt{2}},
	\end{equation}
	let $L \rightarrow L+1$ and go to step 3.  Else, end with payoff estimate of
	\begin{equation}
		\hat{P}_L = \E [P_0] + \sum_{l=1}^L \hat{Y}_l.
	\end{equation}
\end{enumerate}

The inequality (\ref{convtest}) is the convergence criterion used in \cite{giles2008multilevel}.  It determines the algorithm to be converged if the bias error is estimated to be at most $\varepsilon/\sqrt{2}$ when using either of the two finest levels in the estimation.

Equation (\ref{initialsamples}) in step 3 is worthy of elaboration.  When $L=1$, we have no information about how many samples we might expect to need, so we pick an arbitrary large number - we find that $400$ works well in our test cases, but the number will be problem dependent.  However, for $L>1$, the expected scaling of the $V_l$ allows us to estimate the number of samples needed at the $L^\textrm{th}$ level, using
\begin{equation}
	N_L \propto \sqrt{V_L h_L} = M^{-(\beta + 1)/2} \sqrt{V_{L-1} h_{L-1}} \propto M^{-(\beta + 1)/2} N_{L-1}.
\end{equation}
Particularly at large $L$, when there is relatively little change to the sum in (\ref{optsamp2}) as a result of incrementing $L$, (\ref{initialsamples}) is thus a good estimate of $N_L$ as it will be set in step 4.  This is preferable to the technique used in \cite{giles2008multilevel} - where $N_L^i = 10^4$, regardless of $L$ - because we avoid wasteful sampling at the high levels where it may be the case that $N_l \ll 10^4$.


\section{Numerical Results}
For our numerical tests, we apply our methods to the Heston model - a financial stochastic volatility model \cite{heston1993closed} - given by
\begin{equation} \label{heston}
\begin{split}
	dS_1 &= \kappa (\theta - S_1) \, dt + \xi \sqrt{S_1} \, dW_1, \\
	dS_2 &= \mu S_2 \, dt + \eta \sqrt{S_1} S_2 \, dW_2,
\end{split}
\end{equation}
where $S_1$ represents the volatility and $S_2$ the asset price.  Throughout our tests, we set the constants $\theta = \mu = \xi = \kappa = 1$ and $\eta = 1/4$.  We find that this choice of constants allows us to conduct tests with relatively large $L$, where the benefits of MLMC are most obvious.  We set $\bld{S}_0 = (0.5, 1)$, $\Omega_{jk} = \delta_{jk}$, and $T = 0.125$ - a short time simulation allows us to push the limits of the accuracy of the method.  Notice that, for this system, $h_{221} = \eta S_2 / 4$, so that the Milstein discretization does in fact feature L\'{e}vy areas.  

Notice also that the coefficients of this SDE system do not have uniformly bounded derivatives - namely, the $b$'s have divergent derivative at $S_1 = 0$ - so the assumptions for all of the foregoing results do not hold (see e.g.\ theorem \ref{varscale}), nor do those for standard weak convergence results.  However, we have constructed the system so that $S_1$ is extremely unlikely to approach zero, so that in practice all derivatives are essentially bounded.  Indeed, we find excellent agreement between the theory and numerical results.  

We test several distinct new MLMC variants:
\begin{enumerate}
	\item Generalized antithetic method for a payoff with discontinuous derivative
	\item Ito linearization technique for smooth payoff using:
	\begin{enumerate}
		\item Euler discretization
		\item Approximate Milstein discretization
		\item Generalized antithetic discretization
	\end{enumerate}
\end{enumerate}
Each of these is compared to the original Euler-based MLMC method introduced in \cite{giles2008multilevel} and the original antithetic method proposed in \cite{giles2012antithetic}.  

In all tests, the computational cost is estimated by the total number of time steps taken, weighted by the dimension of the system being solved.  That is, for the standard Euler MLMC method, we set
\begin{equation} \label{ecost}
	K^e = \sum_{l=0}^L N_l(h_l^{-1} + h_{l-1}^{-1}), 
\end{equation}
while for the antithetic and generalized antithetic methods (without Ito linearization), we set
\begin{equation} \label{acost}
	K^a = \sum_{l=0}^L N_l (2 h_l^{-1} + h_{l-1}^{-1})
\end{equation}
to account for the added computation of the antithetic variable $\bld{S}^{a,l}$.  For the approximate Milstein method, we set
\begin{equation}
	K^m = \frac{d+1}{d} \sum_{l=0}^L N_l(h_l^{-1} + 2 h_{l-1}^{-1}),
\end{equation}
accounting both for the added cost of computing $\bld{S}^{*,l}$ and the extra dimension.  When Ito linearization is applied to Euler and antithetic methods, we simply scale (\ref{ecost}) and (\ref{acost}) by the factor $(d+1)/d$ and note that $N_0 = 1$.  

The counting of time steps is the standard method of estimating computational complexity in the MLMC literature - it is used in \cite{giles2008multilevel, giles2012antithetic}, among others.  However, we note in our discussion at the end of this section that there are other relevant considerations as well.  

\subsection{Generalized antithetic test}
We use a standard European option as the payoff function:
\begin{equation}
	P(\bld{S}) = \max \left\{ 0, S_2(T) - S_2(0) \right\}.
\end{equation}
Figure 2 demonstrates both the improved variance scaling (left) and corresponding reduction in computational cost (right) afforded by antithetic methods.  As predicted in section 4.3, $M$ equal to 4 or 5 minimizes the computational cost.  The reduction in cost gained by the generalization to arbitrary $M$ is comparable to that gained by moving from Euler to the original antithetic method.  We note that the cost is reduced for larger $M$ in spite of the fact that increasing $M$ actually increases the individual $V_l$.  This because at large $M$, we need fewer levels, so the sum of the variances is smaller because there are fewer terms in the sum.  
\begin{figure}
  \centering
	\includegraphics[width=.47\textwidth, height=.38\textwidth]{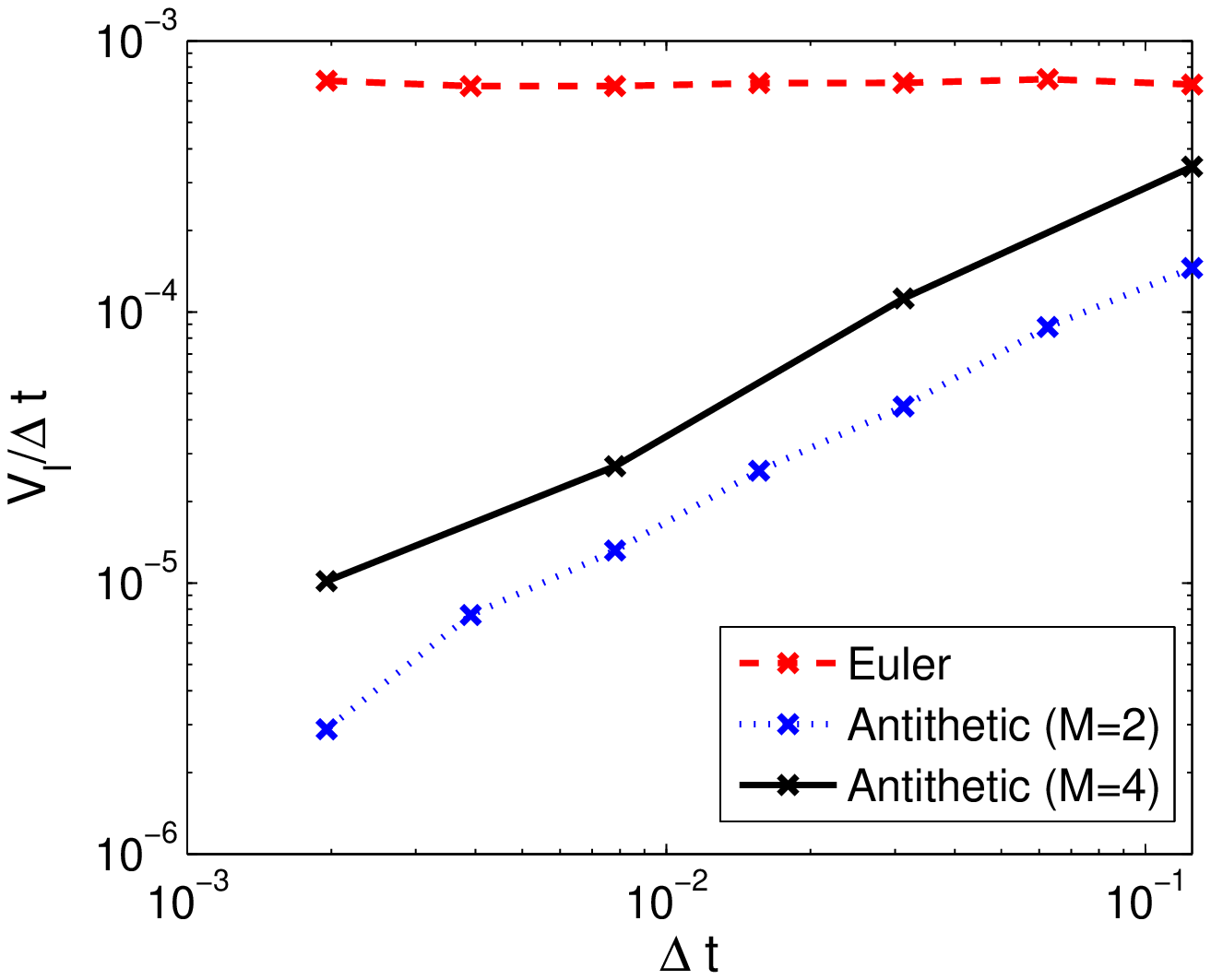}
	\includegraphics[width=.47\textwidth, height=.38\textwidth]{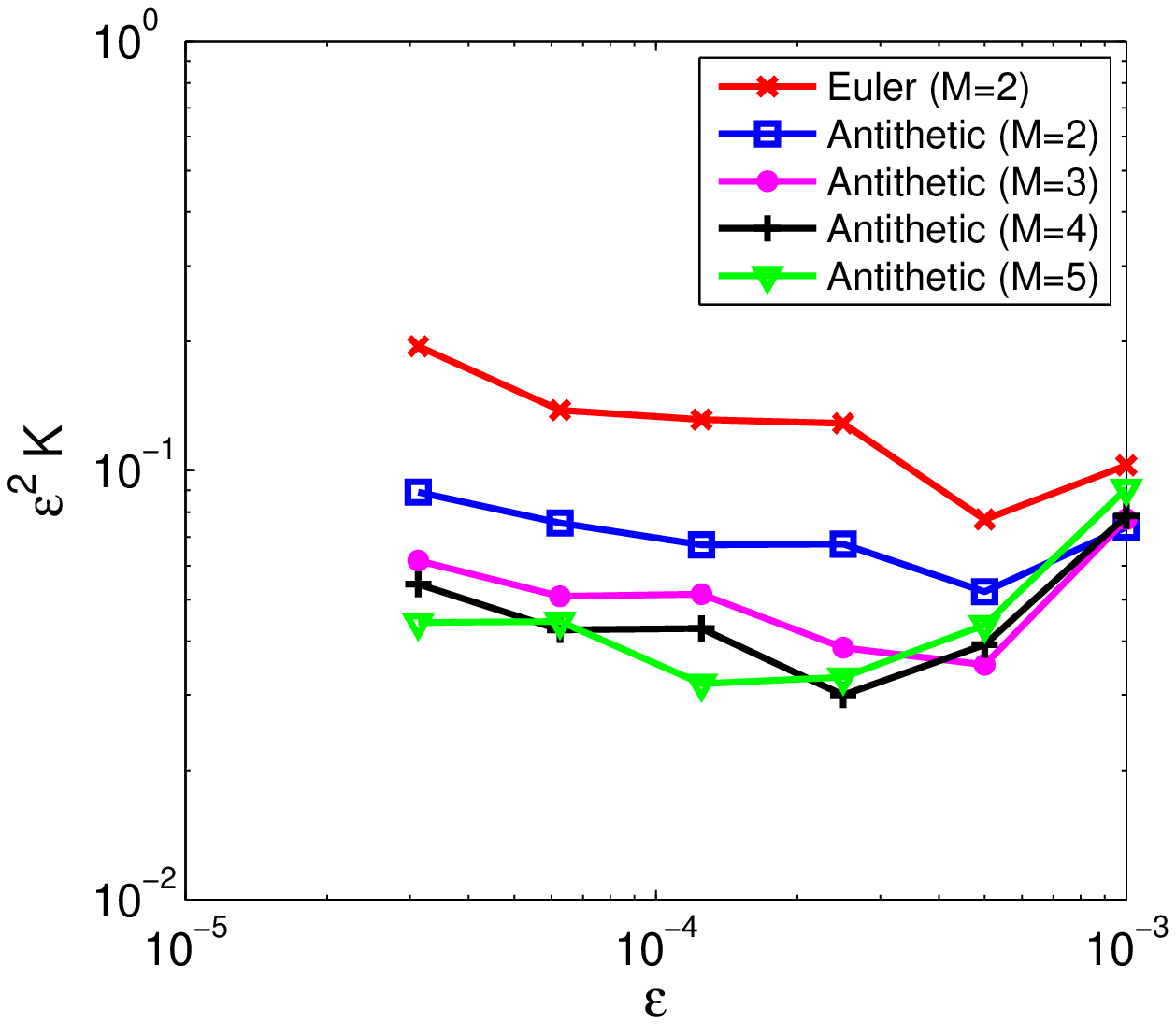}
	\caption{\underline{Left}: A plot of $V_l / \Delta t$ against $\Delta t$ for the Euler, standard antithetic, and generalized antithetic methods.  As expected, the Euler curve is constant, while both antithetic methods demonstrate the same scaling.  \underline{Right}: A comparison of the computational cost of the Euler method to the antithetic method for various refinement factors $M$ as a function of the error tolerance $\varepsilon$.  As expected, an $M$ of $4$ or $5$ is optimal.}
\end{figure} 

\subsection{Ito linearization and approximate Milstein test}
We again use the Heston model with the same parameters but change the payoff function to 
\begin{equation}
	P(\bld{S}) = \sin S_2,
\end{equation}
which of course has the requisite regularity.  Figure 3 shows the results, plotting computational cost against error tolerance $\varepsilon$.  The Euler, antithetic, and approximate Milstein discretizations are all plotted, each with and without Ito linearization.  

For each discretization, Ito linearization improves the efficiency of the scheme, by an order of magnitude in some cases.  Note that in the case of approximate Milstein, the use of Ito's lemma is required for the expectations at adjacent levels to match - see the beginning of section 4 for details - so that the exclusion of Ito linearization is somewhat artificial.  This accounts for the disproportionately large expense when Ito linearization is excluded, since the use of Ito's lemma increases the dimension of the system we solve.  

The most efficient scheme of those tested is approximate Milstein with Ito linearization, although the advantage over generalized antithetic with Ito linearization is relatively small.  As expected based on fig.\ 1, the approximate Milstein and antithetic methods benefit more from Ito linearization than does Euler because a larger fraction of the work is concentrated at the base level.  

\begin{figure}
 \centering
 	\includegraphics[width=.6\textwidth]{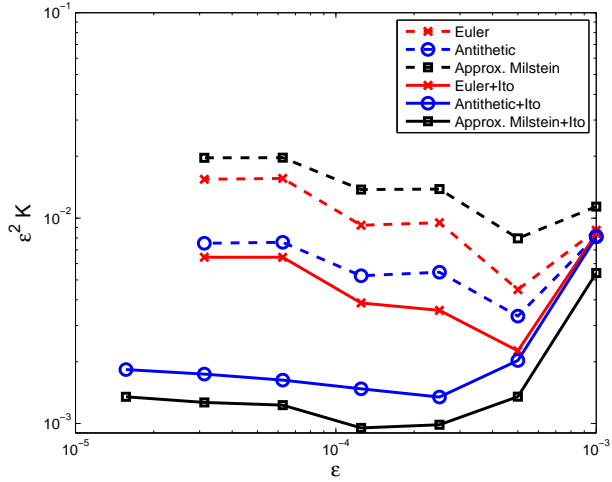}
	\caption{Computational cost of MLMC variants with sinusoidal payoff as a function of error tolerance.  Euler (red), antithetic (blue), and approximate Milstein (black) discretizations are shown, each with Ito linearization (solid) and without (dashed).  All results in this plot use $M=4$.}
\end{figure}

We note that neither the approximate Milstein nor the antithetic method produces a completely flat curve when we plot $\varepsilon^2 K$ against $\varepsilon$, as would be predicted by the asymptotic analysis.  However, the observations are perfectly consistent with (\ref{costformula}): the finite sum is bounded by an infinite sum, but is not itself constant in $\varepsilon$.  We in fact only expect a flat curve as $L \rightarrow \infty$, and all of the tests conducted here have $L \leq 7$.  

Finally, we note that in many cases we get a better-than-expected cost scaling for large $\varepsilon$.  This is a result of setting a fixed number of initial samples for $L=1$ in (\ref{initialsamples}) - and for $L=0$ when Ito linearization is not used - which can turn out to be more than is actually necessary for large $\varepsilon$, making the cost artificially large.  We leave this effect in the plots because it is a practical reality of MLMC.  

\subsection{Discussion}
The techniques introduced in the present work have the common aim of optimizing MLMC simulations of SDEs.  Since the standard MLMC algorithm with the Euler discretization already achieves a nearly optimal cost-to-error scaling, the savings are relatively modest - we rarely save more than a single order of magnitude.  While these improvements are hardly negligible, they are small enough that some care in analyzing all sources of computational cost and coding optimization is justified.  

In particular, the estimation of computational cost by the total number of time steps taken ignores the fact that not all time steps have identical complexity.  In the antithetic and approximate Milstein discretizations, there is an additional term to be computed at each time step.  The dominant cost turns out to be the computation of the rank-3 tensor $h_{ijk}$, which requires $O(d^2D^2)$ operations - the tensor has $dD^2$ elements, and the computation of each requires a sum of $d$ terms.  In contrast, the dominant computation in an Euler time step is the matrix-vector product $\sum_j b_{ij} W_j$, which is $O(dD)$.  

With this in mind, a fairer estimate of the computational cost for each method is
\begin{equation}
	K = \left\{
	\begin{array}{ll}
       		O\left(\varepsilon^{-2} (\log \varepsilon)^2 d D\right) & : \textrm{Euler} \\
       		O\left( \varepsilon^{-2} d^2 D^2\right) & : \textrm{Anithetic \& Approx.\ Milst.}
     	\end{array}
   	\right.
\end{equation}¥
Thus, the optimal discretization is problem dependent - for any fixed $\varepsilon$, Euler with be optimal for some sufficiently large value of $dD$, while antithetic/approximate Milstein is optimal for smaller values of $dD$.  In financial and chemical kinetic applications, $d$ and $D$ are frequently large - very possibly exceeding $\log \varepsilon$.  

This situation is, however, the worst case.  Often, $b_{ij}$ and/or $h_{ijk}$ exhibit some form of sparsity, which the code may be written to exploit.  It may be possible to leverage the relationship between $b_{ij}$ and $h_{ijk}$ to further accelerate their computation.  This is an interesting area of future research.  


\section{Conclusions}
In this paper we have introduced three related improvements to MLMC methods for SDEs.  First, we have introduced the idea of Ito linearization, which makes the computation of the base level payoff essentially free, at the price of increasing the dimension of the SDE by one.  Secondly, we have introduced an approximate Milstein discretization which, in conjunction with Ito linearization, achieves an $O(\varepsilon^{-2})$ cost scaling with slightly reduced cost compared to the antithetic method.  Finally, we demonstrated that the antithetic method can be generalized to arbitrary $M$ without introducing any additional antithetic paths.  

The first two techniques are applicable only to payoff functions with two continuous derivatives.  As such, they are of very limited use in financial applications, but are expected to be applicable to other fields - examples from chemical kinetics have been cited in the text.  The generalized antithetic method, however, requires only a Lipschitz, piecewise smooth payoff, and may thus find applications in finance as well as other disciplines.  

Each new method has been tested on a simple SDE system, and we find excellent agreement between the analysis and the numerical results.  In the cases in which they are applicable, our new methods consistently outperform the present state-of-the-art.  

\section*{Acknowledgements}
The author is grateful to Russel Caflisch for numerous discussions that were instrumental in the development of the methods presented here.  Also acknowledged are many helpful conversations with Mark Rosin regarding the theory and implementation of MLMC schemes in general.  Additional thanks go to Andris Dimits, Bruce Cohen and Michael Giles.  


\appendix 

\section{Proof of Lemma \ref{weakstar}}
The argument here follows that in \cite{mil1979method}.  It proceeds in two stages: first, we express the total error as a sum of various local truncation errors, totaling $O(\Delta t^{-1})$ in number.  Second, we show that each local truncation error is $O(\Delta t^2)$.  

Toward the first end, we introduce some notation for this proof not used elsewhere: Denote by $\bld{S}^{c,l}_n[\bld{x}]$ that solution of the recursion equation (\ref{coarsedisc}) at time $t_n$ which starts at $\bld{x}$ at time zero, and similarly for $\bld{S}^{*,l}_n [\bld{x}]$.  For this proof we will assume that the system is autonomous, so that
\begin{equation} \label{id1}
	\bld{S}^{c,l}_{n+1}[\bld{x}] = \bld{S}^{c,l}_1\left[\bld{S}^{c,l}_n[\bld{x}] \right],
\end{equation}
and similarly for $\bld{S}^{*,l}_{n+1}[\bld{x}]$.  All of the arguments presented here generalize to the non-autonomous case, but the notation is much cleaner if autonomy is assumed.  Define $g^c_n(\bld{x}) = \E f(\bld{S}^{c,l}_n[\bld{x}])$ and similarly for $g^*_n(\bld{x})$.  Then, leveraging (\ref{id1}), we have
\begin{equation} \label{err1}
\begin{split}
	g^c_{n+1}(\bld{x}) - g^*_{n+1}(\bld{x}) &= \E f\left(\bld{S}^{c,l}_1\left[\bld{S}^{c,l}_n[\bld{x}] \right] \right) - \E f\left(\bld{S}^{*,l}_1\left[\bld{S}^{*,l}_n[\bld{x}] \right]\right) \\
	&= \E g^c_1 \left( \bld{S}^{c,l}_n[\bld{x}] \right) - \E g^*_1\left( \bld{S}^{*,l}_n[\bld{x}] \right) \\
	&= \E \left\{ g^c_1 \left( \bld{S}^{c,l}_n[\bld{x}] \right) - g^*_1\left( \bld{S}^{c,l}_n[\bld{x}] \right) \right\} 
	+ \E \left\{ g_1^* \left( \bld{S}^{c,l}_n[\bld{x}] \right) - g_1^* \left( \bld{S}^{*,l}_n[\bld{x}] \right) \right\}.
\end{split}
\end{equation}
The first expectation in the third line is a local truncation error: it's the difference in $f$ evaluated at the coarse and starred discretizations after one time step, when both started at the same place; namely, $\bld{S}^{c,l}_n[\bld{x}]$.  Due to the nature of the coarse and starred discretizations, the function $g_1^*$ is as smooth as $f$, so the second expectation is of the same sort we're trying to bound, but one time step earlier than where we started.  

If we define $\varepsilon_n[f] = \E f(\bld{S}^{c,l}_n[\bld{x}]) - \E f(\bld{S}^{*,l}_n[\bld{x}])$, then (\ref{err1}) reads
\begin{equation}
	\varepsilon_{n+1}[f] = \E \left\{ g^c_1 \left( \bld{S}^{c,l}_n[\bld{x}] \right) - g^*_1\left( \bld{S}^{c,l}_n[\bld{x}] \right) \right\} + \varepsilon_{n}[g^*_1].
\end{equation}
In the same way, we may go on to derive 
\begin{equation}
	\varepsilon_n[g_1^*] = \E \left\{ h^c_1 \left( \bld{S}^{c,l}_n[\bld{x}] \right) - h^*_1\left( \bld{S}^{c,l}_n[\bld{x}] \right) \right\} + \varepsilon_{n-1}[h_1^*]
\end{equation}
for appropriate definitions of $h^c_n$ and $h^*_n$.  Iterating this process, we find
\begin{equation} \label{toterr}
	\varepsilon_n [f] = \sum_{k=1}^{n-1} \E \left\{  h^{k,c}_1 \left( \bld{S}^{c,l}_k[\bld{x}] \right) - h^{k,*}_1\left( \bld{S}^{c,l}_k[\bld{x}] \right) \right\}
\end{equation}
for some sequences of functions $\{h^{k,c}_1\}$ and $\{h^{*,c}_1\}$, each of which is as smooth as $f$ and represents the error in the given function after a single time step (we've used the fact that $\bld{S}^{c,l}_0 = \bld{S}^{*,l}_0$).    This completes the first step of expressing the total error in terms of local truncation errors.  

It just remains to show that each of these errors is $O(\Delta t^2)$.  We do this by Taylor expansion of $f$.  Suppose $f$ has four continuous derivatives, so that we can write out its fourth order Taylor series:
\begin{equation}
\begin{split}
	f\left( \bld{S}_1^{c,l} [\bld{x}] \right) &= f(\bld{x}) + \nabla f(\bld{x}) \cdot \bld{D}^c(\bld{x}, \bld{x}, t_0, \Delta t, \delta W_0, \delta W_{\frac{1}{2}}) \\ 
	&+ \frac{1}{2!} \nabla^2 f(\bld{x}) : \bld{D}^c \bld{D}^c + \frac{1}{3!}\nabla^3 f(\bld{x}) (\bld{D}^c)^3 + \frac{1}{4!}\nabla^4 f(\bld{\xi}) (\bld{D}^c)^4
\end{split}
\end{equation}
for some $\bld{\xi}$ on the line between $\bld{x}$ and $\bld{S}^{c,l}_1$, and $\bld{D}^c$ has the same arguments in all instances.  A similar expression holds for $f(\bld{S}_1^{*,l}[\bld{x}])$.  Subtracting the two expressions and taking expectations, we have
\begin{equation} \label{localtrunc}
\begin{split}
	\E f\left( \bld{S}_1^{c,l} [\bld{x}] \right) - \E f\left( \bld{S}_1^{*,l} [\bld{x}] \right) &= \frac{1}{2!}\nabla^2 f(\bld{x}) : \E \left\{ \bld{D}^c \bld{D}^c - \bld{D}^f \bld{D}^f \right\} \\ 
	&+ \frac{1}{3!}\nabla^3 f(\bld{x}) \E \left\{ (\bld{D}^c)^3 - (\bld{D}^f)^3 \right\}\\ 
	&+ \frac{1}{4!}\E \left\{ \nabla^4 f(\bld{\xi}_1) (\bld{D}^c)^4 \right\} - \E \left\{ \nabla^4 f(\bld{\xi}_2)  (\bld{D}^f)^4 \right\}.
\end{split}
\end{equation}
Careful but straightforward examination of all the expectations in (\ref{localtrunc}) reveals that the lowest order terms that don't vanish in expectation are all $O(\Delta t^2)$.  This is true for any function as smooth as $f$, and so is true of each $h^{k,c}_1$ and $h^{k,*}_1$ in (\ref{toterr}).  The number of terms in the sum in (\ref{toterr}) is $O(\Delta t^{-1})$, so we have the desired result. 


\section{Proof of Theorem \ref{varscale}}
Using Lemmas \ref{fineM2} and \ref{coarseM2}, we may write
\begin{equation} \label{difference2}
\begin{split}
	S^f_{i,n} - S^c_{i,n} &= \left(S^f_{i,n-1} - S^c_{i,n-1}\right) + \left[ a_i(\bld{S}^f_{n-1}) - a_i(\bld{S}^c_{n-1}) \right] \Delta t \\
	&+ \sum_{j=1}^D \left[ b_{ij}(\bld{S}^f_{n-1}) - b_{ij}(\bld{S}^c_{n-1}) \right] \Delta W_{j,n-1} \\
	&+ \sum_{j,k=1}^D \left[h_{ijk}(\bld{S}^f_{n-1}) - h_{ijk}(\bld{S}^c_{n-1}) \right] L_{jk,n-1} \\
	&+ M_{i,n-1} + N_{i,n-1}.
\end{split}
\end{equation}
where $M_{i,n} = M^f_{i,n} + M^c_{i,n}$ and similarly for $N_{i,n}$, and
\begin{equation}
	L_{jk,n} = \Delta W_{j,n} \Delta W_{k,n} - \Omega_{jk} \Delta t.
\end{equation}

If we add up (\ref{difference2}) all the way back to the initial time and use $\bld{S}^f_0 = \bld{S}^c_0$, we have
\begin{equation}
\begin{split}
	S^f_{i,n} - S^c_{i,n} &= \sum_{m=0}^{n-1} \left[ a_i(\bld{S}^f_m) - a_i(\bld{S}^c_m) \right] \Delta t \\
	&+ \sum_{m=0}^{n-1}\sum_{j=1}^D \left[ b_{ij}(\bld{S}^f_m) - b_{ij}(\bld{S}^c_m) \right] \Delta W_{j,m} \\
	&+ \sum_{m=0}^{n-1} \sum_{j,k=1}^D \left[h_{ijk}(\bld{S}^f_m) - h_{ijk}(\bld{S}^c_m) \right] L_{jk,m} \\
	&+ \sum_{m=0}^{n-1} (M_{i,m} + N_{i,m}).
\end{split}
\end{equation}
This is conceptually identical to the second equation in the proof of theorem 4.10 (Appendix 4) in \cite{giles2012antithetic}, and may be treated with exactly the same methods found therein.  

In particular, defining 
\begin{equation}
	R_n = \E \left[ \max_{m\leq n} \norm{\bld{S}^f_m - \bld{S}^c_m}^2 \right],
\end{equation}
one can establish the recursive relation
\begin{equation}
	R_n \leq C \left( \Delta t^2 + \Delta t \sum_{m=0}^{n-1} R_m \right)
\end{equation}
for some $C>0$.  A discrete version of the Gr\"{o}nwall inequality implies
\begin{equation}
	R_n \leq C \left( \Delta t^2 + \sum_{m=0}^{n-1} \Delta t^3 \exp\left\{ \sum_{k=m}^{n-1} \Delta t \right\} \right) \leq C \left( \Delta t^2 + n \Delta t^3 \exp(n\Delta t) \right).  
\end{equation}
Letting $n \rightarrow N$, we have
\begin{equation}
	R_N \leq C (1 + T \exp T ) \Delta t^2,
\end{equation}
which immediately implies the desired result. 


\section{Proof of Lemma \ref{arbM}}
The first step is to reestablish Lemma \ref{fineM2} in the case when the fine steps are allowed to have different step sizes.  The desired result will follow by induction, in that we will treat the first $r$ time-steps as a single step, and the $(r+1)^\textrm{st}$ as the second step.  Let $\delta t_1$ be the time step for the first fine step, and $\delta t_2$ the time step for the second, with $\delta W_{j,n}$ and $\delta W_{j,n+\frac{1}{2}}$ the corresponding Brownian increments with variances $\delta t_1$ and $\delta t_2$, respectively.  That is, 
\begin{equation}
	\bld{S}^f_{n+\frac{1}{2}} = \bld{S}^f_n + \bld{D}^f(\bld{S}^f_n, t_n, \delta t_1, \delta W_{j,n}),
\end{equation}
\begin{equation}
	\bld{S}^f_{n+1} = \bld{S}^f_{n+\frac{1}{2}} + \bld{D}^f(\bld{S}^f_{n+\frac{1}{2}}, t_n+\delta t_1, \delta t_2, \delta W_{j,n+\frac{1}{2}}),
\end{equation}
where we've omitted the $l$ superscripts.  

Through diligent algebra, we can show that 
\begin{equation}
\begin{split}
	S_{i,n+1}^f &= S_{i,n}^f + D^f_i (\bld{S}^f_n,t_n,\delta t_1 + \delta t_2, \delta W_{j,n} + \delta W_{j,n+\frac{1}{2}}) \\
	&-\sum_{j,k=1}^D h_{ijk,n} \left( \delta W_{j,n} \delta W_{k,n+\frac{1}{2}} - \delta W_{k,n} \delta W_{j,n+\frac{1}{2}} \right) \\
	&+ R_{i,n} + M_{i,n}^{(1)} + M_{i,n}^{(2)}
\end{split}
\end{equation}
where
\begin{equation}
\begin{split}
	R_{i,n} &= \left(a_{i,n+\frac{1}{2}} - a_{i,n} \right) \delta t_2 \\
	M_{i,n}^{(1)} &= \sum_{j=1}^D \left( b_{ij,n+\frac{1}{2}} - b_{ij,n} - 2 \sum_{k=1}^D h_{ijk,n} \delta W_{k,n} \right) \delta W_{j,n+\frac{1}{2}} \\
	M_{i,n}^{(2)} &= \sum_{j,k=1}^D \left( h_{ijk,n+\frac{1}{2}} - h_{ijk,n} \right) \left( \delta W_{j,n+\frac{1}{2}} \delta W_{k,n+\frac{1}{2}} - \Omega_{jk} \delta t_2 \right).
\end{split}
\end{equation}
From here, the argument bounding the remainder terms proceeds exactly as in Lemma 4.7 in \cite{giles2012antithetic}.  In particular, $M_{i,n}^{(1)}$ and $M_{i,n}^{(2)}$ have vanishing expectation, and may be shown to scale like $\Delta t^{3/2}$ by Taylor expanding $b_{ij,n+\frac{1}{2}}$ and $h_{ijk,n+\frac{1}{2}}$ about $t_n$.  Similarly, $a_{i,n+\frac{1}{2}}$ is Taylor expanded to separate $R_{i,n}$ into two terms, one of which satisfies the appropriate scaling for $M_{i,n}$ while the other satisfies the scaling for $N_{i,n}$.  We refer the interested reader to \cite{giles2012antithetic} for a thorough treatment.  

We now proceed by induction: Suppose that for some $r < M$, we've shown that
\begin{equation} \label{inductee}
\begin{split}
	S^{f,l}_{i,n+\frac{r}{M}} &= S^{f,l}_{i,n} + D_i^f \left( \bld{S}^{f,l}_n, t_n, r\delta t, \sum_{q=0}^{r-1} \delta W_{j,n+\frac{q}{M}} \right) \\
	&- \sum_{j,k=1}^D h_{ijk}(\bld{S}^{f,l}_n) \left(\mathcal{A}_{jk,n}^{(r)} - \mathcal{A}_{kj,n}^{(r)}\right) \\
	&+ M^f_{i,n} + N^f_{i,n} \\
\end{split}
\end{equation}
where
\begin{equation}
	\mathcal{A}_{jk,n}^{(r)} = \sum_{m=1}^{r-1} \left( \delta W_{k,n+\frac{m}{M}} \sum_{q=0}^{m-1} \delta W_{j,n+\frac{q}{M}} \right).
\end{equation}
and $M^f_{i,n}$ and $N^f_{i,n}$ have the scalings stated in the lemma.  Note that the base case $r=1$ is trivial, and that $r=2$ is given by Lemma \ref{fineM2}.  Then, applying our modified version of Lemma \ref{fineM2} to (\ref{inductee}) and
\begin{equation}
	S^{f,l}_{i,n+\frac{r+1}{M}} = S^{f,l}_{i,n+\frac{r}{M}} + D_i^f \left( \bld{S}^{f,l}_{n+\frac{r}{M}},t_n + r\delta t, \delta t, \delta W_{j,n+\frac{r}{M}} \right),
\end{equation}
we have
\begin{equation}
\begin{split}
	S^{f,l}_{i,n+\frac{r+1}{M}} &= S^{f,l}_{i,n} + D_i^f \left( \bld{S}^{f,l}_n, t_n, r\delta t, \sum_{q=0}^{r} \delta W_{j,n+\frac{q}{M}} \right) \\
	&- \sum_{j,k=1}^D h_{ijk}(\bld{S}^{f,l}_n) \left[ \delta W_{k,n+\frac{r}{M}} \left( \sum_{q=0}^{r-1} \delta W_{j,n+\frac{q}{M}} \right) - \delta W_{j,n+\frac{r}{M}} \left( \sum_{q=0}^{r-1} \delta W_{k,n+\frac{q}{M}} \right) \right] \\
	&- \sum_{j,k=1}^D h_{ijk}(\bld{S}^{f,l}_n) \left(\mathcal{A}_{jk,n}^{(r)} - \mathcal{A}_{kj,n}^{(r)}\right) \\
	&+ M^f_{i,n} + N^f_{i,n} \\
\end{split}
\end{equation}
where the new remainder terms have been absorbed into the $M^f_{i,n}$ and $N^f_{i,n}$.  The second and third lines can be combined to obtain
\begin{equation}
\begin{split}
	S^{f,l}_{i,n+\frac{r+1}{M}} &= S^{f,l}_{i,n} + D_i^f \left( \bld{S}^{f,l}_n, t_n, r\delta t, \sum_{q=0}^{r} \delta W_{j,n+\frac{q}{M}} \right) \\
	&- \sum_{j,k=1}^D h_{ijk}(\bld{S}^{f,l}_n) \left(\mathcal{A}_{jk,n}^{(r+1)} - \mathcal{A}_{kj,n}^{(r+1)}\right) \\
	&+ M^f_{i,n} + N^f_{i,n} \\
\end{split}
\end{equation}
Letting the induction carry to $M$, we have the desired result, for $\mathcal{A}_{jk,n}^{(M)} = \mathcal{A}_{jk,n}$ by definition.


\bibliographystyle{plain}
\bibliography{mlmcpaper}
\end{document}